\newtheorem{theorem}{Theorem}[section]
\newtheorem{lemma}[theorem]{Lemma}
\newtheorem{remark}{Remark}
\journal{arXiv.org}
\newcommand{\TheTitle}{A low-rank power iteration scheme for neutron transport criticality problems} 
\date{\today}
\newcommand{\keff}{{k_\mathrm{eff}}}
\journal{arXiv}
\begin{document}
\begin{frontmatter}

\title{\TheTitle}

\author[adressJonas]{Jonas Kusch}
\author[adressRyan]{Benjamin Whewell}
\author[adressRyan]{Ryan McClarren}
\author[adressJonas]{Martin Frank}

\address[adressJonas]{Karlsruhe Institute of Technology, Karlsruhe, Germany, \{jonas.kusch, martin.frank\}@kit.edu}
\address[adressRyan]{University of Notre Dame, Notre Dame, Indiana, USA, \{bwhewell, rmcclarr\}@nd.edu}

\begin{abstract}
Computing effective eigenvalues for neutron transport often requires a fine numerical resolution. The main challenge of such computations is the high memory effort of classical solvers, which limits the accuracy of chosen discretizations. In this work, we derive a method for the computation of effective eigenvalues when the underlying solution has a low-rank structure. This is accomplished by utilizing dynamical low-rank approximation (DLRA), which is an efficient strategy to derive time evolution equations for low-rank solution representations. The main idea is to interpret the iterates of the classical inverse power iteration as pseudo-time steps and apply the DLRA concepts in this framework. In our numerical experiment, we demonstrate that our method significantly reduces memory requirements while achieving the desired accuracy. Analytic investigations show that the proposed iteration scheme inherits the convergence speed of the inverse power iteration, at least for a simplified setting.
\end{abstract}

\begin{keyword}
Dynamical low-rank approximation, kinetic equations, neutron transport, unconventional integrator
\end{keyword}

\end{frontmatter}

\section*{Introduction}
In analyzing nuclear systems the eigenvalue problem that describes the behavior of a neutron-induced fission chain reaction known as the $k$-eigenvalue problem is of fundamental importance.  The magnitude of the dominant eigenvalue indicates whether the fission chain reaction will 1) ultimately diverge, 2) reach a constant, non-zero steady state, or 3) decay to zero. These cases are known as supercritical, critical, or subcritical systems, respectively. 

Given that the dominant (i.e., maximal) eigenvalue determines the character of the system, the inverse power iteration method (and its variations) is the most commonly applied method. In this method the neutron transport operator is repeatedly applied to an initial guess of the eigenvector in a similar manner to solving a steady-state problem where the righthand side changes with each iteration \cite{mcclarren2017computational}. 

There are a variety of mathematical models for the transport of neutrons including Monte Carlo \cite{lux2018monte}, discrete ordinates (S$_N$) \cite{lewis1984computational}, spherical harmonics (P$_N$) \cite{bell1970nuclear}, and simplified  P$_N$ methods \cite{mcclarren2010theoretical}. The workhorse model for nuclear systems is the multigroup diffusion method \cite{stacey2018nuclear} where the neutron energies are discretized into finite energy ranges known as groups, and an elliptic, diffusion operator is used to approximate the migration of neutrons. Such a model will require storage of a solution that has a size that is the product of the energy and spatial degrees of freedom.  When dealing with a highly heterogeneous system such as a nuclear reactor, the number of spatial degrees of freedom can be enormous \cite{wang2009three}, and to correctly approximate the reaction rates the number energy degrees of freedom can easily reach several hundred \cite{hazama2006development}. For this reason, many problems are approximated by coarsened descriptions in space and energy to enable exploration of design space in engineering applications. These coarse problem descriptions can be accurate for systems that have either historical data to calibrate to or where past high-fidelity calculations can be used to inform discretizations.  Nevertheless, the promise of small modular nuclear systems, interplanetary exploration scale reactors, and other emerging technologies may not be able to rely on these techniques.


To enable high-fidelity calculations we seek numerical techniques that have reduced  memory and computational costs. An efficient method with low memory requirements for time-dependent problems is dynamical low-rank approximation (DLRA). This method has been introduced for matrix-valued solutions in \cite{KochLubich07} and an extension to tensors is given in \cite{KochLubich10}. Its main idea is to represent and evolve the solution on a manifold of rank $r$ functions. DLRA yields evolution equations for the individual factors of the solution \cite{KochLubich07}, which can be solved numerically with standard methods. Robust integrators for these equations are the matrix projector-splitting integrator \cite{LubichOseledets} and the unconventional integrator \cite{CeL21}. The original use of dynamical low-rank approximation focuses on matrix ordinary differential equations. Note that partial differential equations can be brought into such a form by performing a discretization of the phase-space except for time. In \cite{EiL18}, the derivation of DLRA has been extended to function spaces, i.e., the evolution equations can be derived before discretizing the problem. This approach does not only provide the ability to derive stable discretizations \cite{kusch2021stability}, but in the radiation transfer context allows an efficient implementation of scattering \cite{kusch2021stability,kusch2021robust}. Problems in which dynamical low-rank is successfully applied to reduce memory and computational costs are, e.g., kinetic theory \cite{EiL18,einkemmer2019quasi,PeMF20,PeM20,einkemmer2020low,EiHY21,EiJ21,guo2021low,kusch2021robust} as well as uncertainty quantification \cite{FeL18,MuN18,MuNV20,SaL09,kusch2021DLRUQ}. Furthermore, DLRA allows for adaptive model refinement \cite{DeRV20,ceruti2021rank,rodgers2020adaptive}, where the main idea is to pick the rank of the solution representation adaptively. Recently, an approach to employ DLRA for computing rightmost eigenpairs has been proposed in \cite{guglielmi2021computing}.

In this work, we combine dynamical low-rank approximation and the neutron diffusion equations to obtain an inverse power iteration scheme with low memory requirements. For this, we treat the updates of the power iteration as pseudo-timesteps. DLRA is then applied to the iteration scheme. I.e., the solution is represented as a product of rank $r$ matrices and the resulting DLRA update equations yield an iteration scheme of each matrix. The memory requirement of the solution representation reduces from $N_x\times G$, where $N_x$ is the number of spatial cells and $G$ is the number of energy groups to $N_x\times r + G\times r$, where $r\ll N_x,G$.

This paper is structured as follows: After the introduction, we provide an overview over the used concepts in Section~\ref{sec:background}: Section~\ref{sec:backgroundKeig} presents a discretization of the full $k$-eigenvalue problem for matrix solutions $\bm\phi\in\mathbb{R}^{N_x\times G}$. Section~\ref{sec:DLRAIntro} shortly reviews important concepts of dynamical low-rank approximation used in this work. In Section~\ref{sec:DLRAPowerIt} we employ dynamical low-rank approximation to derive a memory efficient iteration scheme for the $k$-eigenvalue problem. Section \ref{sec:convergence} gives a convergence proof for the scheme in a simplified setting. Numerical results are presented in Section \ref{sec:numerical} before concluding with Section \ref{sec:conclusion}.

\section{Background}\label{sec:background}
In the following, we give a short overview of the concepts employed in this work. Furthermore, we use this section to write the energy-dependent neutron diffusion equation in a compact matrix notation, which simplifies the derivation of DLRA evolution equations.
\subsection{$k$-eigenvalue computation}\label{sec:backgroundKeig}
Our main goal is to determine the maximum eigenvalue $\keff$ as well as its corresponding eigenfunction $\phi$ of the neutron transport $k$-eigenvalue problem. The corresponding multigroup approximation reads
\begin{align}\label{eq:keigenmultigroup}
    - \nabla \cdot D_g(r)\nabla\phi_g(r) + \Sigma_{t,g}(r)\phi_g(r) = \frac{\chi_g}{\keff}\sum_{g'}\nu\Sigma_{f,g'}(r)\phi_{g'}(r) + \sum_{g'}\Sigma_{s,g',g}(r) \phi_g(r),
\end{align}
where $r\in \Omega\subset \mathbb{R}^{d}$ is the spatial domain, $\Sigma_{t,g}(r)$ is the total cross section of energy group $g$ at spatial position $r$, $\Sigma_{f,g}(r)$ is the fission cross-section and $\Sigma_{s,g',g}(r)$ is the scattering cross section between groups $g$ and $g'$. Furthermore, $\nu$ is the mean number of particles produced per fission event and $\chi_g$ is the fission neutron distribution function for group $g$. We are interested in computing $\phi_g(r)$ which is the integral of the scalar flux over the energy range of group $g$ at position $r$ corresponding to the maximal eigenvalue $\keff$. For sake of readability, we omit boundary conditions and state them whenever needed.

In the following, our aim is to state a numerical discretization of \eqref{eq:keigenmultigroup}, which treats the scalar flux at given spatial points as a matrix. That is, for a spatial grid $r_1,\cdots,r_{N_x}$ and energy groups $g\in\{1,\cdots,G\}$ we have $\bm{\phi} = (\phi_{jg})_{j,g  = 1}^{N_x,G}$, where $\phi_{jg} = \phi_g(x_j)$. Such a formulation enables the use of dynamical low-rank approximation to reduce memory requirements. First, the different material coefficients can be written as
\begin{align}\label{eq:expansionD}
    D_g(r) = \sum_{\ell = 1}^{N_m} \rho_{\ell}(r)D_{g}^{(\ell)},
\end{align}
where $N_m$ is the number of materials and $D_{g}^{(\ell)}$ is the diffusion coefficient (or any other material coefficient) for material $\ell$ and energy group $g$. The functions $\rho_{\ell}(r)$ denote densities of material $\ell$. When evaluating $D_g$ at cell interfaces $r_{j+1/2} = (r_j+r_{j+1})/2$, one needs to approximate the the material coefficient though its harmonic mean, i.e.,
\begin{align*}
    D_g(r_{j+1/2}) =: D_{g,j+1/2} = 2\frac{D_{g,j} D_{g,j+1}}{D_{g,j} + D_{g,j+1}} = 2\frac{\sum_{\ell,k}\rho_j^{(\ell)}D_g^{(\ell)}\rho_{j+1}^{(k)}D_g^{(k)}}{\sum_{\ell}(\rho_j^{(\ell)}+\rho_{j+1}^{(\ell)})D_g^{(\ell)}}.
\end{align*}
Noting that only two terms in the sums can be non-zero, we get
\begin{align}\label{eq:approxD}
    D_{g,j+1/2} = \sum_{\ell,k=1}^{N_m}\rho_j^{(\ell)}\rho_{j+1}^{(k)}\left(\rho_j^{(\ell)}+\rho_{j+1}^{(k)}\right)\frac{D_g^{(\ell)}D_g^{(k)}}{D_g^{(\ell)}+D_g^{(k)}}.
\end{align}
The diffusion operator in one dimension is discretized through
\begin{align*}
    \nabla \cdot D_g(r)\nabla\phi_g(r)\Big|_{r=r_j} \approx \left(\bm D(g)\bm \phi_g \right)_j
\end{align*}
where $\bm \phi_g\in\mathbb{R}^{N_x}$ collects the scalar flux at all spatial cells. The matrix $\bm D(g)\in\mathbb{R}^{N_x\times N_x}$ has values
\begin{align*}
    D_{j,j\pm 1}(g) &= \pm\frac{1}{\Delta r\cdot V_j} D_{g,j\pm 1/2}S_{j\pm 1/2}, \\
    D_{j,j}(g) &= -\frac{1}{\Delta r\cdot V_j} \left[D_{g,j+1/2}S_{j+1/2}+D_{g,j-1/2}S_{j-1/2}\right],
\end{align*}
where $\Delta r$ is the size of each radial element. The surface area between cell $j$ and $j\pm 1$ is denoted by $S_{j\pm 1/2}$ and the area of cell $j$ is denoted by $V_j$. The choice of these terms defines the spatial geometry. In our numerical experiments, we look at spherical domains, where we have $V_j = \frac{4\pi}{3}(r_{i+1/2}^3-r_{i-1/2}^3)$ and $S_{j\pm 1/2}=4\pi r_{i\pm 1/2}^2$.
Using \eqref{eq:approxD} in the above definition of $\bm D(g)$, lets us write $\bm{D}(g)\bm{\phi}_g$ as
\begin{align*}
  \bm{D}(g)\bm{\phi}_g = \sum_{\ell,k = 1}^{N_m} \frac{D_g^{(\ell)}D_g^{(k)}}{D_g^{(\ell)}+D_g^{(k)}}\bm D^{(\ell,k)}\bm \phi_g,
\end{align*}
where we use
\begin{align*}
    D^{(\ell,k)}_{j,j\pm 1} &= \pm\frac{\rho_j^{(\ell)}\rho_{j\pm 1}^{(k)}}{\Delta r\cdot V_j}  (\rho_{\ell}(r_{j})+\rho_{k}(r_{j\pm 1}))S_{j\pm 1/2}, \\
    D^{(\ell,k)}_{j,j} &= -\frac{1}{\Delta r\cdot V_j} \left[ \rho_j^{(\ell)}\rho_{j+1}^{(k)}\left(\rho_j^{(\ell)}+\rho_{j+1}^{(k)}\right)S_{j+1/2}+ \rho_j^{(\ell)}\rho_{j-1}^{(k)}\left(\rho_j^{(\ell)}+\rho_{j-1}^{(k)}\right)S_{j-1/2}\right].
\end{align*}
Let us further use the diagonal matrix $\bm{\rho}^{(\ell)}\in\mathbb{R}^{N_x\times N_x}$ with entries $\rho^{(\ell)}_{j j} = \rho_{\ell}(r_j)$ to write
\begin{align*}
   \frac{\chi_g \nu}{k}\sum_{g'}\Sigma_{f,g'}(r)\phi_{g'}(r) &= \frac{\chi_g\nu}{k}\sum_{g',\ell} \rho_{\ell}(r_j)\Sigma_{f,g'}^{(\ell)}\phi_{g'}(r_j) = \frac{\chi_g\nu}{k}\sum_{g',\ell} \Sigma_{f,g'}^{(\ell)}\bm{\rho}^{(\ell)}\bm \phi_{g'}, \\
   \sum_{g'}\Sigma_{s,g',g}(r_j) \phi_g(r_j) &= \sum_{g',\ell}\rho_{\ell}(r_j)\Sigma_{s,g',g}^{(\ell)} \phi_g(r)= \sum_{g',\ell}\Sigma_{s,g',g}^{(\ell)}\bm{\rho}^{(\ell)}\bm \phi_{g'}.
\end{align*}
Then, for a given group $g$, the diffusion equation reads
\begin{align*}
  -\sum_{\ell,k = 1}^{N_m} \frac{D_g^{(\ell)}D_g^{(k)}}{D_g^{(\ell)}+D_g^{(k)}}\bm D^{(\ell,k)}\bm \phi_g + \sum_{\ell = 1}^{N_m}\Sigma_{t,g}^{(\ell)}\bm{\rho}^{(\ell)}\bm\phi_g = \frac{\chi_g\nu}{\keff}\sum_{g',\ell} \Sigma_{f,g'}^{(\ell)}\bm{\rho}^{(\ell)}\bm \phi_{g'}+\sum_{g',\ell}\Sigma_{s,g',g}^{(\ell)}\bm{\rho}^{(\ell)}\bm \phi_{g'}.
\end{align*}
We can write this term as a bigger system for $\bm\phi\in\mathbb{R}^{N_x\times G}$. For this, we define $\bm{\widetilde\Sigma}_{f}^{(\ell)}= \left(\chi_g\nu\Sigma_{f,g'}^{(\ell)}\right)_{g,g'=1}^G$ and the diagonal matrices $\bm M^{(\ell,k)}\in\mathbb{R}^{G\times G}$ with entries $M_{gg}^{(\ell,k)} = D_g^{(\ell,k)}$ as well as $\bm{\Sigma}_t^{(\ell)}\in\mathbb{R}^{G\times G}$ with $\Sigma_{t,gg}^{(\ell)}=\Sigma_{t,g}^{(\ell)}$. Then we have
\begin{align*}
    -\sum_{\ell,k} \bm D^{(\ell,k)}\bm \phi \bm M^{(\ell,k)} + \sum_{\ell}\bm{\rho}^{(\ell)}\bm\phi\bm{\Sigma}_t^{(\ell)} = \frac{1}{\keff}\sum_{\ell} \bm{\rho}^{(\ell)}\bm \phi\bm{\widetilde\Sigma}_{f}^{(\ell)}+\sum_{\ell}\bm{\rho}^{(\ell)}\bm \phi\bm{\Sigma}_{s}^{(\ell)}.
\end{align*}
We solve the above equation for $\bm{\phi}$ and $\keff$ with an inverse power iteration. For this, an iteration index is assigned to $\bm{\phi}$ and an update of the scalar flux is given by
\begin{align}\label{eq:poweriteration1}
     -\sum_{\ell,k} \bm D^{(\ell,k)}\bm{\widetilde\phi}^{n+1} \bm M^{(\ell,k)} + \sum_{\ell}\bm{\rho}^{(\ell)}\bm{\widetilde\phi}^{n+1}\left(\bm{\Sigma}_t^{(\ell)}-\bm{\Sigma}_{s}^{(\ell)}\right) = \sum_{\ell} \bm{\rho}^{(\ell)}\bm{\phi}^{n}\bm{\widetilde\Sigma}_{f}^{(\ell)}.
\end{align}
The iteration method to determine the eigenvalue $k$ then reads
\begin{enumerate}
    \item Start with initial guess $\bm{\phi}^0_g$ with $g=1,\cdots, G$.
    \item Compute $\bm{\widetilde\phi}^{n+1}$ from \eqref{eq:poweriteration1}.
    \item Set $k_{n+1} = \Vert \bm{\widetilde\phi}^{n+1} \Vert$ and $\bm{\phi}^{n+1} = \bm{\widetilde\phi}^{n+1}/ k_{n+1}$
    \item If $|k_{n+1}-k_n|\leq\varepsilon$ stop, else set $n\leftarrow n+1$ and repeat from step 2.
\end{enumerate}
Note that the chosen norm $\Vert \cdot \Vert$ denotes the Frobenius norm.

\subsection{Dynamical low-rank approximation}\label{sec:DLRAIntro}

In the following, the dynamical low-rank approximation \cite{KochLubich07} will be reviewed. We will employ this method to derive a memory efficient iteration scheme. DLRA is commonly derived for time dependent problems of the form
\begin{align}\label{eq:advectionSystem}
\dot{\mathbf u}(t) = \mathbf{F}(\mathbf u(t)),
\end{align}
where $\bm{u}\in\mathbb{R}^{N\times M}$ and $\bm{F}:\mathbb{R}^{N\times M}\rightarrow\mathbb{R}^{N\times M}$. To reduce computational complexity as well as memory requirements, DLRA represents and evolves the solution to \eqref{eq:advectionSystem} on a manifold of rank $r$ functions. In our case, the solution is represented by an SVD-like decomposition of the form
\begin{align}\label{eq:rankrsol}
\mathbf u(t)\approx\mathbf{X}(t)\mathbf{S}(t)\mathbf{W}(t)^T,
\end{align}
where $\mathbf{X}\in\mathbb{R}^{N\times r}$ and $\mathbf{W}\in\mathbb{R}^{M\times r}$ can be interpreted as basis matrices with orthogonal columns and $\bm S\in\mathbb{R}^{r\times r}$ is a (not necessarily diagonal) coefficient matrix. We denote the set of matrices of the form \eqref{eq:rankrsol}, i.e., the set of rank $r$ matrices, as $\mathcal{M}_r$. In order to derive evolution equations for the two basis matrices and their corresponding coefficient matrix, we wish to find $\mathbf u_r\in\mathcal{M}_r$ which fulfills
\begin{align}\label{eq:DLRproblem}
\dot{\mathbf u}_r(t)\in T_{ \mathbf u_r(t)}\mathcal{M}_r \qquad \text{such that} \qquad \left\Vert \dot{\mathbf u}_r(t)-\mathbf{F}(\mathbf u(t)) \right\Vert = \text{min}.
\end{align}
Here, $T_{\mathbf u_r(t)}\mathcal{M}_r$ denotes the tangent space of $\mathcal{M}_r$ at $\mathbf u_r(t)$. Then, following \cite[Proposition~2.1]{KochLubich07}, the condition \eqref{eq:DLRproblem} leads to update equations 
\begin{subequations}\label{eq:equationsFactors}
\begin{align}
    \dot{\bm{S}} =& \bm{X}^T\mathbf{F}(\mathbf u(t))\bm{W}, \\
    \dot{\bm{X}} =& (\bm I - \bm{X}\bm{X}^T)\mathbf{F}(\mathbf u(t))\bm{W}\bm{S}^{-1}, \\
    \dot{\bm{W}} =& (\bm I - \bm{W}\bm{W}^T)\mathbf{F}(\mathbf u(t))^T\bm{X}\bm{S}^{-T}.
\end{align}
\end{subequations}
Due to the inverse coefficient matrix on the right-hand side of the above equations, this formulation is not robust under small eigenvalues. Robust integrators for the low-rank factors are the \textit{projector-splitting integrator} \cite{LubichOseledets} as well as the \textit{unconventional integrator} \cite{CeL21}. In this work, we make use of the latter, however a derivation for the projector-splitting integrator is possible as well. The unconventional integrator updates the solution from time $t_0$ to $t_1$ via
\begin{enumerate}
    \item \textbf{$K$-step}: Update $\mathbf X^{0}$ to $\mathbf X^{1}$ via
    \begin{align}
        \dot{\mathbf K}(t) &= \mathbf{F}(\mathbf{K}(t)\mathbf{W}^{0,T})\mathbf{W}^0, \qquad \mathbf K(t_0) = \mathbf{X}^0\mathbf{S}^0.\label{eq:KStepSemiDiscreteUI}
    \end{align}
Determine $\mathbf X^1$ with $\mathbf K(t_1) = \mathbf X^1 \mathbf R$ and store $\mathbf M = \mathbf X^{1,T}\mathbf X^0$.
\item \textbf{$L$-step}: Update $\mathbf W^0$ to $\mathbf W^1$ via
\begin{align}
\dot{\mathbf L}(t) &= \mathbf{X}^{0,T}\mathbf{F}(\mathbf{X}^0\mathbf{L}(t)), \qquad \mathbf L(t_0) = \mathbf{S}^0 \mathbf{W}^{0,T}.\label{eq:LStepSemiDiscreteUI}
\end{align}
Determine $\mathbf W^1$ with $\mathbf L^1 = \mathbf W^1\mathbf{\widetilde R}$ and store $\mathbf N = \mathbf W^{1,T} \mathbf W^0$.
\item \textbf{$S$-step}: Update $\mathbf S^0$ to $\mathbf S^1$ via
\begin{align}
\dot{\mathbf S}(t) = \mathbf{X}^{1,T}\mathbf{F}(\mathbf{X}^{1}\mathbf{S}(t)\mathbf{W}^{1,T})\mathbf{W}^1, \qquad \mathbf S(t_0) &= \mathbf M\mathbf S^0 \mathbf N^T\label{eq:SStepSemiDiscreteUI}
\end{align}
and set $\mathbf S^1 = \mathbf S(t_1)$.
\end{enumerate}
The updated solution is then given as $\bm{u}(t_1) = \bm X(t_1)\bm S(t_1)\bm W(t_1)^T$. When using a forward Euler time discretization we obtain the equations
\begin{subequations}\label{eq:discreteDLRAUpdates}
\begin{align}
    \mathbf{K}^{n+1} &= \bm{K}^n + \Delta t \mathbf{F}(\mathbf{K}^n\mathbf{W}^{n,T})\mathbf{W}^n, \qquad \mathbf K^n = \mathbf{X}^n\mathbf{S}^n, \\
    \mathbf L^{n+1} &= \mathbf L^{n} + \Delta t\mathbf{X}^{n,T}\mathbf{F}(\mathbf{X}^n\mathbf{L}^n), \qquad \mathbf L^n = \mathbf{S}^n \mathbf{W}^{n,T}, \\
    \mathbf S^{n+1} &= \mathbf S^{n}+\Delta t\mathbf{X}^{n+1,T}\mathbf{F}(\mathbf{X}^{n+1}\mathbf{X}^{n+1,T}\mathbf{X}^{n}\mathbf{S}^n\mathbf{W}^{n,T}\mathbf{W}^{n+1}\mathbf{W}^{n+1,T})\mathbf{W}^{n+1}.
\end{align}
\end{subequations}
In the following, we use the above scheme to define a low-rank inverse power iteration for the $k$-eigenvalue problem.

\section{Dynamical low-rank approximation for the inverse power iteration}\label{sec:DLRAPowerIt}
\subsection{Iteration equations for low-rank factors}
In the following, we derive the DLRA evolution equations of the unconventional integrator. The equations of the matrix projector-splitting integrator take a similar form. Let us choose a rank $r$ representation for $\bm \phi^n$, which reads $\bm\phi^n \approx\bm{X}^n\bm{S}^n\bm{W}^n$. Here, $\bm X^n\in\mathbb{R}^{N_x\times r}$, $\bm S^n\in\mathbb{R}^{r\times r}$ and $\bm{W}^n\in\mathbb{R}^{G\times r}$. The main idea of our derivation is to interpret the iteration index $n$ as a pseudo-time, i.e., we can use \eqref{eq:discreteDLRAUpdates} to define an iteration on the factoring matrices. Let us first plug the rank $r$ representation into \eqref{eq:poweriteration1}. Furthermore, we define $\bm{\Sigma}^{(\ell)}:=\bm{\Sigma}_t^{(\ell)}-\bm{\Sigma}_{s}^{(\ell)}$. Then we have
\begin{align}\label{eq:iterationLowRankInput}
     -\sum_{\ell,k} \bm D^{(\ell,k)}\bm{\widetilde\phi}^{n+1} \bm M^{(\ell,k)} + \sum_{\ell}\bm{\rho}^{(\ell)}\bm{\widetilde\phi}^{n+1}\bm{\Sigma}^{(\ell)} = \sum_{\ell} \bm{\rho}^{(\ell)}\bm{X}^n\bm{S}^n\bm{W}^{n,T}\bm{\widetilde\Sigma}_{f}^{(\ell)}.
\end{align}
This form simplifies the presentation of the following $K$, $L$ and $S$-step derivations:

\textbf{K-step:} We define $\bm{K}^{n} = \bm{X}^n\bm{S}^n$, set $\bm{\widetilde\phi}^{n+1} = \bm{K}^{n+1}\bm{W}^{n,T}$ and multiply \eqref{eq:iterationLowRankInput} with $\bm{W}^n$ from the right. Then, we have
\begin{align*}
     -\sum_{\ell,k} \bm D^{(\ell,k)}\bm{K}^{n+1}\bm{W}^{n,T} \bm M^{(\ell,k)}\bm{W}^n + \sum_{\ell}\bm{\rho}^{(\ell)}\bm{K}^{n+1}\bm{W}^{n,T}\bm{\Sigma}^{(\ell)}\bm{W}^n = \sum_{\ell} \bm{\rho}^{(\ell)}\bm{K}^{n}\bm{W}^{n,T}\bm{\widetilde\Sigma}_{f}^{(\ell)}\bm{W}^n.
\end{align*}
The terms
\begin{align*}
    \bm{\widehat\Sigma}_{f,n}^{(\ell)} = \bm{W}^{n,T}\bm{\widetilde\Sigma}_{f}^{(\ell)}\bm{W}^n, \qquad \bm{\widehat M}^{(\ell,k)}_n=\bm{W}^{n,T} \bm M^{(\ell,k)}\bm{W}^n,\qquad\bm{\widehat\Sigma}_n^{(\ell)}=\bm{W}^{n,T}\bm{\Sigma}^{(\ell)}\bm{W}^n
\end{align*}
can be computed in $O(r^2\cdot G^2)$ operations. With the above definitions, the $K$-step reads
\begin{align}\label{eq:KStep}
    -\sum_{\ell,k} \bm D^{(\ell,k)}\bm{K}^{n+1}\bm{\widehat M}^{(\ell,k)}_n+\sum_{\ell}\bm{\rho}^{(\ell)}\bm{K}^{n+1}\bm{\widehat\Sigma}^{(\ell)}_n = \sum_{\ell} \bm{\rho}^{(\ell)}\bm{K}^{n}\bm{\widehat\Sigma}_{f,n}^{(\ell)}.
\end{align}
\textbf{L-step:} We define $\bm{L}^{n} = \bm{S}^n\bm{W}^{n,T}$, set $\bm{\widetilde\phi}^{n+1} = \bm{X}^{n}\bm{L}^{n+1}$ and multiply \eqref{eq:iterationLowRankInput} with $\bm{X}^{n,T}$ from the left. Then, we get
\begin{align*}
    -\sum_{\ell,k} \bm{X}^{n,T}\bm D^{(\ell,k)}\bm{X}^n\bm{L}^{n+1} \bm M^{(\ell,k)}+\sum_{\ell}\bm{X}^{n,T}\bm{\rho}^{(\ell)}\bm{X}^n\bm{L}^{n+1}\bm{\Sigma}^{(\ell)} = \sum_{\ell} \bm{X}^{n,T}\bm{\rho}^{(\ell)}\bm{X}^{n}\bm{L}^{n}\bm{\widetilde\Sigma}_{f}^{(\ell)}.
\end{align*}
The terms
\begin{align*}
    &\bm{\widehat \rho}^{(\ell)}_n = \bm{X}^{n,T}\bm{\rho}^{(\ell)}\bm{X}^{n}, \qquad \bm{\widehat D}^{(\ell,k)}_n=\bm{X}^{n,T}\bm D^{(\ell,k)}\bm{X}^n
\end{align*}
can be computed in $O(r^2\cdot N_x^2)$ operations. With the above definitions, the $L$-step reads
\begin{align}\label{eq:LStep}
    -\sum_{\ell,k} \bm{\widehat D}^{(\ell,k)}_n\bm{L}^{n+1} \bm M^{(\ell,k)} +\sum_{\ell}\bm{\widehat \rho}^{(\ell)}_{n}\bm{L}^{n+1}\bm{\Sigma}^{(\ell)} = \sum_{\ell} \bm{\widehat \rho}^{(\ell)}\bm{L}^{n}\bm{\widetilde\Sigma}_{f}^{(\ell)}.
\end{align}
\textbf{S-step:} We define $\bm{S} = \bm{X}^{n+1,T}\bm{X}^{n}\bm{S}^n\bm{W}^{n,T}\bm{W}^{n+1}$, set $\bm{\widetilde\phi}^{n+1} = \bm{X}^{n+1}\bm{S}^{n+1}\bm{W}^{n+1,T}$ and multiply \eqref{eq:iterationLowRankInput} with $\bm{X}^{n+1,T}$ from the left and $\bm{W}^{n+1}$ from the right. Then, we get
\begin{align*}
    -\sum_{\ell,k}& \bm{X}^{n+1,T}\bm D^{(\ell,k)}\bm{X}^{n+1}\bm{S}^{n+1}\bm{W}^{n+1,T} \bm{M}^{(\ell,k)}\bm{W}^{n+1}\\
    &+\sum_{\ell}\bm{X}^{n+1,T}\bm{\rho}^{(\ell)}\bm{X}^{n+1}\bm{S}^{n+1}\bm{W}^{n+1,T}\bm{\Sigma}^{(\ell)}\bm{W}^{n+1}= \sum_{\ell} \bm{X}^{n+1,T}\bm{\rho}^{(\ell)}\bm{X}^{n+1}\bm{S}\bm{W}^{n+1,T}\bm{\widetilde\Sigma}_{f}^{(\ell)}\bm{W}^{n+1} .
\end{align*}
Let us reuse our above definitions, but evaluate them at iteration $n+1$ instead of $n$. Then, the $S$-step reads
\begin{align}\label{eq:SStep}
    -\sum_{\ell} \bm{\widehat D}^{(\ell)}_{n+1}\bm{S}^{n+1}\bm{\widehat M}^{(\ell)}_{n+1}+\sum_{\ell}\bm{\widehat \rho}^{(\ell)}_{n+1}\bm{S}^{n+1}\bm{\widehat\Sigma}^{(\ell)}_{n+1} = \sum_{\ell} \bm{\widehat \rho}^{(\ell)}_{n+1}\bm{S}\bm{\widehat\Sigma}_{f,n+1}^{(\ell)}.
\end{align}
\subsection{Inversions}
In the presented method, we frequently need to solve systems of the form
\begin{align*}
    -\sum_{\ell}\bm{A}^{(\ell)}\bm{x}\bm{B}^{(\ell)}+\sum_{\ell}\bm{C}^{(\ell)}\bm{x}\bm{D}^{(\ell)} = \bm{y}.
\end{align*}
Let us assume general dimensions $N$ and $M$. Then $\bm A^{(\ell)},\bm{C}^{(\ell)}\in\mathbb{R}^{N\times N}$, $\bm B^{(\ell)},\bm{D}^{(\ell)}\in\mathbb{R}^{M\times M}$, and we wish to determine $\bm x\in\mathbb{R}^{N\times M}$ for a right-hand-side $\bm{y}\in\mathbb{R}^{N\times M}$. Written in index notation, we obtain
\begin{align*}
    -\sum_{\ell}\sum_{j,\alpha}A_{ij}^{(\ell)}x_{j\alpha}B_{\alpha \beta}^{(\ell)}+\sum_{\ell}\sum_{j,\alpha}C_{ij}^{(\ell)}x_{j\alpha}D_{\alpha \beta}^{(\ell)} = y_{i\beta}.
\end{align*}
Let us rearrange this to a matrix-vector product:
\begin{align*}
    \sum_{j,\alpha}\sum_{\ell}\left(-A_{ij}^{(\ell)}B_{\alpha \beta}^{(\ell)}+C_{ij}^{(\ell)}D_{\alpha \beta}^{(\ell)}\right)\tilde{x}_{\alpha+(j-1)r} = \tilde{y}_{\beta+(i-1)r},
\end{align*}
where $\bm{\tilde x},\bm{\tilde y}\in\mathbb{R}^{N\cdot M}$ are the matrices $\bm x$ and $\bm{y}$ rearranged to vectors. Hence, we can define the matrix $\bm{\mathcal E}\in\mathbb{R}^{N\cdot M\times N\cdot M}$ with entries
\begin{align*}
    \mathcal E_{\beta+(i-1)r,\alpha+(j-1)r} = \sum_{\ell}\left(-A_{ij}^{(\ell)}B_{\alpha \beta}^{(\ell)}+C_{ij}^{(\ell)}D_{\alpha \beta}^{(\ell)}\right)
\end{align*}
and then solve the linear system $\bm{\mathcal E}\bm{\tilde x} = \bm{\tilde y}$. Note that for the $K$-step \eqref{eq:KStep}, we have $N = N_x$ and $M=r$. The $L$-step \eqref{eq:LStep} has $N = G$ and $M=r$ and the $S$-step \eqref{eq:SStep} has dimensions $N = M = r$. Opposed to the original problem, which inverts a matrix $\bm E \in \mathbb{R}^{N_x\cdot G\times N_x\cdot G}$, we now need to invert three significantly smaller subproblems (assuming $r\ll \max\{N_x,G\}$). It should be noted that many implementations of the multigroup diffusion equations solve the equations on a group-by-group iteration based on the Gauss-Seidel method. These methods require the storage of a matrix of dimensions $N_x \times N_x$ and a storage of a solution vector of size $N_x\cdot G$. Nevertheless, our implementation is able to avoid the building of matrices at each step and because it is not necessarily matrix free, it could utilize a wider variety of preconditioning strategies. 

\subsection{Algorithm}
The full algorithm takes the following form:
\begin{enumerate}
    \item \textbf{$K$-step}: Update $\mathbf X^{n}$ to $\mathbf X^{n+1}$ via
    \begin{align*}
        -\sum_{\ell,k} \bm D^{(\ell,k)}\bm{K}^{n+1}\bm{\widehat M}^{(\ell,k)}_n+\sum_{\ell}\bm{\rho}^{(\ell)}\bm{K}^{n+1}\bm{\widehat\Sigma}^{(\ell)}_n = \sum_{\ell} \bm{\rho}^{(\ell)}\bm{K}^{n}\bm{\widehat\Sigma}_{f,n}^{(\ell)}.
    \end{align*}
Determine $\mathbf X^{n+1}$ with $\mathbf K^{n+1} = \mathbf X^{n+1} \mathbf R$ and store $\mathbf N_x = \mathbf X^{n+1,T}\mathbf X^n$.
\item \textbf{$L$-step}: Update $\mathbf W^n$ to $\mathbf W^{n+1}$ via
\begin{align*}
    -\sum_{\ell,k} \bm{\widehat D}^{(\ell,k)}_n\bm{L}^{n+1} \bm M^{(\ell,k)} +\sum_{\ell}\bm{\widehat \rho}^{(\ell)}_{n}\bm{L}^{n+1}\bm{\Sigma}^{(\ell)} = \sum_{\ell} \bm{\widehat \rho}^{(\ell)}\bm{L}^{n}\bm{\widetilde\Sigma}_{f}^{(\ell)}.
\end{align*}
Determine $\mathbf W^{n+1}$ with $\mathbf L^{n+1} = \mathbf W^{n+1}\mathbf{\widetilde R}$ and store $\mathbf N_E = \mathbf W^{n+1,T} \mathbf W^n$.
\item \textbf{$S$-step}: Update $\mathbf S^n$ to $\mathbf S^{n+1}$ via
\begin{align}\label{eq:SStep}
    -\sum_{\ell} \bm{\widehat D}^{(\ell,k)}_{n+1}\bm{\widetilde S}^{n+1}\bm{\widehat M}^{(\ell,k)}_{n+1}+\sum_{\ell}\bm{\widehat \rho}^{(\ell)}_{n+1}\bm{\widetilde S}^{n+1}\bm{\widehat\Sigma}^{(\ell)}_{n+1} = \sum_{\ell} \bm{\widehat \rho}^{(\ell)}_{n+1}\bm{S}\bm{\widehat\Sigma}_{f,n+1}^{(\ell)}.
\end{align}
with $\bm S = \bm{N}_x\bm{S}^n\bm{N}_E^T$. Set $k^{n+1} = \Vert \bm{\widetilde S}^{n+1}\Vert$ and $\bm{S}^{n+1} = \bm{\widetilde S}^{n+1}/k^{n+1}$.
\item If $|k_{n+1}-k_n|\leq\varepsilon$ stop, else set $n\leftarrow n+1$ and repeat.
\end{enumerate}
An extension of this algorithm to rank adaptivity according to \cite{ceruti2021rank} is straight forward. We state the rank adaptive algorithm here, even though our numerical computations have all been done with the fixed rank integrator.
\begin{enumerate}
    \item \textbf{$K$-step}: Update $\mathbf X^{n}\in\mathbb{R}^{N_x\times r_n}$ to $\mathbf X^{n+1}\in\mathbb{R}^{N_x\times 2 r_n}$ via
    \begin{align*}
        -\sum_{\ell,k} \bm D^{(\ell,k)}\bm{K}^{n+1}\bm{\widehat M}^{(\ell,k)}_n+\sum_{\ell}\bm{\rho}^{(\ell)}\bm{K}^{n+1}\bm{\widehat\Sigma}^{(\ell)}_n = \sum_{\ell} \bm{\rho}^{(\ell)}\bm{K}^{n}\bm{\widehat\Sigma}_{f,n}^{(\ell)}.
    \end{align*}
Determine $\mathbf X^{n+1}$ with $[\mathbf K^{n+1}, \bm{X}^n] = \mathbf X^{n+1} \mathbf R$ and store $\mathbf N_x = \mathbf X^{n+1,T}\mathbf X^n\in\mathbb{R}^{2 r_n\times r_n}$.
\item \textbf{$L$-step}: Update $\mathbf W^n\in\mathbb{R}^{G\times r_n}$ to $\mathbf W^{n+1}\in\mathbb{R}^{G\times 2 r_n}$ via
\begin{align*}
    -\sum_{\ell,k} \bm{\widehat D}^{(\ell,k)}_n\bm{L}^{n+1} \bm M^{(\ell,k)} +\sum_{\ell}\bm{\widehat \rho}^{(\ell)}_{n}\bm{L}^{n+1}\bm{\Sigma}^{(\ell)} = \sum_{\ell} \bm{\widehat \rho}^{(\ell)}\bm{L}^{n}\bm{\widetilde\Sigma}_{f}^{(\ell)}.
\end{align*}
Determine $\mathbf W^{n+1}$ with $[\mathbf L^{n+1}, \mathbf W^n] = \mathbf W^{n+1}\mathbf{\widetilde R}$ and store $\mathbf N_E = \mathbf W^{n+1,T} \mathbf W^n\in\mathbb{R}^{2 r_n\times r_n}$.
\item \textbf{$S$-step}: Update $\mathbf S^n\in\mathbb{R}^{r_n\times r_n}$ to $\widehat{\mathbf S}^{n+1}\in\mathbb{R}^{2 r_n\times 2 r_n}$ via
\begin{align}\label{eq:SStep}
    -\sum_{\ell,k} \bm{\widehat D}^{(\ell,k)}_{n+1}\bm{\widehat S}^{n+1}\bm{\widehat M}^{(\ell,k)}_{n+1}+\sum_{\ell}\bm{\widehat \rho}^{(\ell)}_{n+1}\bm{\widetilde S}^{n+1}\bm{\widehat\Sigma}^{(\ell)}_{n+1} = \sum_{\ell} \bm{\widehat \rho}^{(\ell)}_{n+1}\bm{S}\bm{\widehat\Sigma}_{f,n+1}^{(\ell)}.
\end{align}
with $\bm S = \bm{N}_x\bm{S}^n\bm{N}_E^T\in\mathbb{R}^{2 r_n\times 2 r_n}$. 
\item \textbf{Truncation}:
		Determine the SVD $\; \bm{\widehat{S}}^{n+1}= \bm{\widehat{P}}  \bm{\widehat{\Sigma}} \bm{\widehat{Q}}^\top$ where $\bm{\widehat{\Sigma}}=\text{diag}(\sigma_j)$. For a given tolerance~$\vartheta$, choose the new rank $r_{n+1}\le 2r_n$ such that
		$$
		\biggl(\ \sum_{j=r_{n+1}+1}^{2r} \sigma_j^2 \biggr)^{1/2} \le \vartheta.
		$$
		Compute the new factors for the approximation of $\phi^{n+1}$ as follows:
Let $\bm{S}_1$ be the $r_1\times r_1$ diagonal matrix with the $r_1$ largest singular values and let $\bm{P}_1\in \mathbb{R}^{2r\times r_1}$ and $\bm{Q}_1\in \mathbb{R}^{2r\times r_1}$ contain the first $r_1$ columns of $\bm{\widehat{P}}$ and $\bm{\widehat{Q}}$, respectively. Finally, set $\bm{U}_1 = \bm{\widehat{U}} \bm{P}_1\in \mathbb{R}^{m\times r_1}$ and
$\bm{V}_1 = \bm{\widehat{V}} \bm{Q}_1 \in \mathbb{R}^{n\times r_1}$.
\item Set $k^{n+1} = \Vert \bm{\widetilde S}^{n+1}\Vert$ and $\bm{S}^{n+1} = \bm{\widetilde S}^{n+1}/k^{n+1}$. If $|k_{n+1}-k_n|\leq\varepsilon$ stop, else set $n\leftarrow n+1$ and repeat.
\end{enumerate}
\section{Convergence in a simplified setting}\label{sec:convergence}
Let us remark the following properties of the presented iteration method: If the iteration converges to a so-called steady state, the error of the maximal eigenvalue depends on the low-rank structure of the steady state solution. I.e., the accuracy of method depends on the low-rank structure of the full problem.
To investigate convergence, we investigate a simplified setting for the power iteration. Assume that we wish to determine $\lambda$ such that $\bm{A}\bm{\phi}\bm{B} = \lambda \bm C \bm{\phi} \bm D$, i.e.,
\begin{align}\label{eq:simplifiedProblem}
    \bm{\phi} = \lambda \bm{A}^{-1}\bm{C} \bm{\phi} \bm{D}\bm{B}^{-1}.
\end{align}
Then, the power iteration for the full problem becomes
\begin{align*}
    \bm{\phi}^{n+1} = \frac{\bm{A}^{-1}\bm{C} \bm{\phi}^n \bm{D}\bm{B}^{-1}}{\Vert \bm{A}^{-1}\bm{C} \bm{\phi}^{n} \bm{D}\bm{B}^{-1}\Vert} = \frac{\bm{\widehat C} \bm{\phi}^n \bm{\widehat D}}{\Vert\bm{\widehat C} \bm{\phi}^{n} \bm{\widehat D}\Vert}
\end{align*}
where $\bm{\widehat C} := \bm{A}^{-1}\bm{C}$ and $\bm{\widehat D} := \bm{D}\bm{B}^{-1}$. Let us assume that $\bm{\widehat C} = \bm{V}\bm{\Lambda} \bm{V}^{-1}$ and $\bm{\widehat D} = \bm{U}^{-1}\bm{\Sigma}\bm{U}$. In this case, our maximal eigenvalue is of rank $1$, namely $\bm{v}_1\bm{u}_1^T$. Then, we write our initial iterate as
\begin{align*}
    \phi^{0}_{jg} = \sum_{\ell,k} V_{j\ell}\alpha_{\ell k} U_{k g}
\end{align*}
or in terms of matrices $\bm{\phi}^{n} = \bm{V}\bm{\alpha}^{n} \bm{U}$. Plugging this into the iteration scheme yields
\begin{align*}
    \bm{\phi}^{1} = \frac{\bm{V}\bm{\Lambda} \bm{V}^{-1} \bm{\phi}^{0} \bm{U}^{-1}\bm{\Sigma} \bm{U}}{\Vert \bm{V}\bm{\Lambda} \bm{V}^{-1} \bm{\phi}^0 \bm{W}^{-1}\bm{\Sigma} \bm{U}\Vert} = \frac{\bm{V}\bm{\Lambda} \bm{\alpha}\bm{\Sigma} \bm{U}}{\Vert \bm{V}\bm{\Lambda} \bm{\alpha}\bm{\Sigma} \bm{U}\Vert}
\end{align*}
Applying this multiple times gives
\begin{align*}
    \bm{\phi}^{n+1} = \frac{\bm{V}\bm{\Lambda}^n \bm{\alpha}\bm{\Sigma}^n \bm{U}}{\Vert \bm{V}\bm{\Lambda}^n \bm{\alpha}\bm{\Sigma}^n \bm{U}\Vert}
\end{align*}
Let us collect columns of $\bm{V}$ and $\bm{U}$ in vectors $\bm{v}_i$ and $\bm{u}_i$. Furthermore, let us define $\mu_{ij} = \frac{\lambda_i\sigma_j}{\lambda_1\sigma_1}$. Then, we can rewrite the above expression as
\begin{align*}
    \bm{\phi}^{n+1} = \frac{\sum_{i,j}\lambda_i^n\sigma^n_j \alpha_{ij} \bm{v}_i \bm{u}_j^T}{\Vert \sum_{i,j}\lambda_i^n\sigma^n_j \alpha_{ij} \bm{v}_i \bm{u}_j^T\Vert} = \frac{\lambda_1^n\sigma_1^n}{\vert \lambda_1^n\sigma_1^n\vert}\frac{\sum_{i,j}\mu_{ij}^n \alpha_{ij} \bm{v}_i \bm{u}_j^T}{\Vert \sum_{i,j}\mu_{ij}^n \alpha_{ij} \bm{v}_i\bm{u}_j^T\Vert}.
\end{align*}
Since $\lim_{n\rightarrow\infty}\mu_{ij}^n = \delta_{i1}\delta_{j1}$, we have that $\lim_{n\rightarrow\infty}\phi^{(n+1)} = \bm{v}_1 \bm{u}_1^T$. Hence
\begin{align*}
    \lim_{n\rightarrow\infty} k^{n} = \Vert \bm{A}^{-1}\bm{C} \bm{\phi}^{n} \bm{D}\bm{B}^{-1}\Vert = \lambda_1\sigma_1.
\end{align*}
Now we come to the dynamical low-rank algorithm. Here we need to assume that the directions $\bm{v}_1$ and $\bm{u}_1$ lie in the initial basis matrices. Let us take a closer look at what this means for the spatial basis. For this, we collect the i-th column of $\bm{X}$ in the vector $\bm{X}_i$. We say that $\bm{v}_1$ is contained in $\bm{X}$ if for any $i\in\{1,\cdots,r\}$ we have a representation $\bm{X}_i = \sum_{j=1}^{N_x}T_{ij} \bm{v}_j$ with $\bm{T}_{i}=(T_{ij})_{j=1}^{N_x}\in\mathbb{R}^{N_x}$ and $T_{i1}\neq 0$. Hence, for $\bm{v}_1$ being contained in $\bm{X}$ we do not require $\bm{v}_i$ to be spanned by the $r$ columns of $\bm{X}$, we only require that $\bm{v}_i$ lies in the representation of any column. Throughout the proof, we assume the following
\begin{remark}\label{as:1}
We assume that the eigenvectors $\bm{u}_i$ and $\bm{v}_i$ have unit norm. Furthermore, it is assumed that $\bm{A}^{-1}\bm{C}$ as well as $\bm{D}\bm{B}^{-1}$ exist and have full rank.
\end{remark}
Moreover, we employ the following notation:
\begin{remark}
The Frobenius norm is denoted by $\Vert \cdot \Vert$, whereas the spectral norm is denoted by $\Vert\cdot\Vert_2$. Recall that for matrices $\bm Y$ and $\bm{Z}$ we have $\Vert \bm{Y}\bm{Z}\Vert \leq \Vert \bm{Y}\Vert_2\cdot\Vert\bm{Z}\Vert\leq \Vert \bm{Y}\Vert\cdot\Vert\bm{Z}\Vert$.
\end{remark}

Let us start with with a look at the $K$-step
\begin{lemma}\label{le:convergenceBasis}
Assume that we have a problem of the form \eqref{eq:simplifiedProblem} and there exist $N_x$ linear independent eigenvectors $\bm v_j$ of $\bm{A}^{-1}\bm{C}$. If the direction $\bm{v}_1$ of the maximal eigenvalue $\lambda_1$ is contained in any column of $\bm{X}^0$, then there exists an $i\leq r$ such that
\begin{align*}
    \Vert \bm{X}_i^n - \bm{v}_1\Vert\leq C\cdot |\lambda_2/\lambda_1|^{n},
\end{align*}
i.e., the eigenvector $\bm{v}_1$ lies in the column range of $\bm{X}^n$ for $n\rightarrow\infty$. Furthermore, for $j\neq i$, we have 
\begin{align}\label{eq:LemmaOrtho}
    \vert \bm{v}_1^T\bm{X}_j^n\vert\leq C\cdot |\lambda_2/\lambda_1|^{n}, \qquad\vert \bm{e}_1^T \bm{T}_j^n\vert \leq \widetilde C\cdot |\lambda_2/\lambda_1|^{n}.
\end{align}
\end{lemma}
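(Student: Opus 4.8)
The plan is to analyse the $K$-step in isolation — in the simplified setting it decouples from the $L$- and $S$-steps — and then to reduce everything to the classical power iteration in the eigenbasis of $\bm{\widehat C}=\bm A^{-1}\bm C$. Specialising \eqref{eq:KStep} to problem \eqref{eq:simplifiedProblem}, with $\bm K^n=\bm X^n\bm S^n$ and $\bm{\widetilde\phi}^{n+1}=\bm K^{n+1}\bm W^{n,T}$, one obtains $\bm A\bm K^{n+1}(\bm W^{n,T}\bm B\bm W^n)=\bm C\bm K^n(\bm W^{n,T}\bm D\bm W^n)$, hence $\bm K^{n+1}=\bm{\widehat C}\,\bm K^n\bm G_n$ with $\bm G_n:=(\bm W^{n,T}\bm D\bm W^n)(\bm W^{n,T}\bm B\bm W^n)^{-1}\in\mathbb R^{r\times r}$. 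As long as $\bm S^n$ and $\bm G_n$ stay invertible — consistent with Remark~\ref{as:1} — the reorthonormalisation $\bm K^{n+1}=\bm X^{n+1}\bm R$ gives $\mathrm{range}(\bm X^{n+1})=\mathrm{range}(\bm{\widehat C}\bm K^n)=\bm{\widehat C}\,\mathrm{range}(\bm X^n)$, independently of $\bm W^n$; iterating, $\mathrm{range}(\bm X^n)=\bm{\widehat C}^{\,n}\,\mathrm{range}(\bm X^0)$.

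I would then pass to $\bm{\widehat C}=\bm V\bm\Lambda\bm V^{-1}$, ordered so that $|\lambda_1|>|\lambda_2|\ge\cdots$, and write $\bm X^0=\bm V\bm\Theta^0$ so that the columns of $\bm\Theta^0=\bm V^{-1}\bm X^0$ are the coefficient vectors $\bm T_i^0$; the containment hypothesis says precisely that the first row $\bm\tau^T$ of $\bm\Theta^0$ (the $\bm v_1$–weights of the columns of $\bm X^0$) is nonzero. Then $\bm{\widehat C}^{\,n}\bm X^0=\bm V\bm\Lambda^n\bm\Theta^0=\lambda_1^n(\bm v_1\bm\tau^T+\bm F_n)$, where $\bm F_n:=\bm V\,\diag(0,(\lambda_2/\lambda_1)^n,\dots,(\lambda_{N_x}/\lambda_1)^n)\,\bm\Theta^0$ has all columns in $\mathrm{span}\{\bm v_2,\dots,\bm v_{N_x}\}$ and $\|\bm F_n\|_2\le c\,|\lambda_2/\lambda_1|^n$. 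Applying this matrix to $\bm c:=\bm\tau/\|\bm\tau\|^2$ gives $\lambda_1^n(\bm v_1+\bm F_n\bm c)\in\mathrm{range}(\bm X^n)$, so after normalisation $\mathrm{dist}(\bm v_1,\mathrm{range}(\bm X^n))\le C\,|\lambda_2/\lambda_1|^n$.

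The first conclusion and the first bound in \eqref{eq:LemmaOrtho} are then immediate. The orthogonal projection $\bm X^n\bm X^{n,T}\bm v_1$ differs from $\bm v_1$ by at most $C|\lambda_2/\lambda_1|^n$ and has norm at least $1-C|\lambda_2/\lambda_1|^n$; since the $X$–basis is defined only up to right multiplication by an orthogonal matrix (which leaves the reconstructed iterate unchanged), I would choose it so that one column $\bm X_i^n$ equals the normalised projection, giving $\|\bm X_i^n-\bm v_1\|\le C|\lambda_2/\lambda_1|^n$. For $j\ne i$ orthonormality of $\bm X^n$ then yields $0=\bm X_i^{n,T}\bm X_j^n=(\bm v_1+O(|\lambda_2/\lambda_1|^n))^{T}\bm X_j^n$, and $\|\bm X_j^n\|=1$ forces $|\bm v_1^T\bm X_j^n|\le C|\lambda_2/\lambda_1|^n$.

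The remaining bound in \eqref{eq:LemmaOrtho} is the crux. Here $\bm e_1^T\bm T_j^n=\bm w_1^T\bm X_j^n$, where $\bm w_1^T$ is the first row of $\bm V^{-1}$ — the left eigenvector of $\bm{\widehat C}$ for $\lambda_1$ normalised by $\bm w_1^T\bm v_1=1$, which annihilates $\bm v_2,\dots,\bm v_{N_x}$. Writing $\bm X_j^n=(\bm v_1\bm\tau^T+\bm F_n)\bm c_j$ and using that $\bm F_n$ carries no $\bm v_1$–component gives $\bm w_1^T\bm X_j^n=\bm\tau^T\bm c_j$, while the orthogonality $\bm X_i^{n,T}\bm X_j^n=0$ reads $(\bm\tau^T\bm c_i)(\bm\tau^T\bm c_j)=-\bm c_i^T\bm\Xi_n\bm c_j$ with $\|\bm\Xi_n\|_2=O(|\lambda_2/\lambda_1|^n)$ and $\bm\tau^T\bm c_i=1+O(|\lambda_2/\lambda_1|^n)$, which should pin $\bm\tau^T\bm c_j$ down. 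I expect this to be the main obstacle: the coordinate vectors $\bm c_j$ — equivalently the accumulated change-of-basis matrices $\bm G_0\cdots\bm G_{n-1}$ — become ill-conditioned as $n\to\infty$, because the non-dominant columns of $\bm{\widehat C}^{\,n}\bm X^0$ collapse toward $\mathrm{span}\{\bm v_1\}$, so bounding $\bm c_i^T\bm\Xi_n\bm c_j$ submultiplicatively is too crude; one must instead carry the cancellation built into the structure of $\bm{\widehat C}^{\,n}$ through the argument, e.g. by an induction on $n$ that keeps the $\bm v_1$–coordinate of each column of $\bm X^n$ under control.
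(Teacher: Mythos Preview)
Your route coincides with the paper's for the first two claims. Both establish $\mathrm{range}(\bm X^{n})=\bm{\widehat C}^{\,n}\,\mathrm{range}(\bm X^{0})$ from the $K$-step (the paper uses that $\bm S^{n}\bm{\widetilde D}^{n}(\bm{\widetilde B}^{n})^{-1}$ has rank~$r$, so the right-multiplier does not affect the column range), pass to the $\bm V$-eigenbasis, and reduce to ordinary power iteration. The paper does not rotate the basis explicitly as you do; it simply writes each column as $\bm X_i^{n}=\gamma_{i,n}^{-1}\bm V\bm\Lambda_c^{\,n}\bm\alpha_i$ with fixed $\bm\alpha_i$ and bounds $\|\bm X_i^{n}-\bm v_1\|$ directly, then gets $|\bm v_1^{T}\bm X_j^{n}|=|(\bm v_1-\bm X_i^{n})^{T}\bm X_j^{n}|$ from orthonormality exactly as you propose.

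Where you flag the third inequality as ``the crux'' and anticipate ill-conditioning, the paper does the opposite and dispatches it in a single line: from $\bm T_i^{n}=\bm V^{-1}\bm X_i^{n}\to\bm e_1$ it writes
\[
|\bm e_1^{T}\bm T_j^{n}|=|(\bm e_1-\bm T_i^{n})^{T}\bm T_j^{n}|\le\|\bm e_1-\bm T_i^{n}\|\,\|\bm T_j^{n}\|.
\]
The first equality silently uses $\bm T_i^{n,T}\bm T_j^{n}=0$, but only $\bm X_i^{n,T}\bm X_j^{n}=\bm T_i^{n,T}\bm V^{T}\bm V\bm T_j^{n}=0$ is actually available; the two agree only when $\bm V$ is orthogonal, i.e.\ when $\bm{\widehat C}=\bm A^{-1}\bm C$ is normal. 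Your instinct that $\bm e_1^{T}\bm T_j^{n}=\bm w_1^{T}\bm X_j^{n}$ (left eigenvector) is genuinely different from $\bm v_1^{T}\bm X_j^{n}$ is therefore correct. Indeed, for general $\bm V$ the stated bound cannot hold: if $r\ge 2$ and (generically) $\mathrm{range}(\bm X^{n})\to\mathrm{span}\{\bm v_1,\bm v_2\}$, then $\bm X_j^{n}$ tends to the unit vector in that plane orthogonal to $\bm v_1$, whose $\bm v_1$-coordinate is $\bm e_1^{T}\bm T_j^{n}\to -\,\bm v_1^{T}\bm v_2/\sqrt{1-(\bm v_1^{T}\bm v_2)^{2}}\neq 0$. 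So the obstacle you identified is real, and it is not resolved by the paper either; the argument (here and in the subsequent theorem, which consumes this bound through $\|\bm P_i^{\perp}\bm T^{n+1,T}\bm P_1\|$) implicitly requires orthogonal eigenvectors, an assumption that should be added explicitly.
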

\begin{proof}
Our derivation of the $K$-step yields
\begin{align*}
    \bm{A} \bm{K}^{n+1}\bm{W}^{n,T} \bm{B} \bm{W}^{n} = \bm{C} \bm{K}^{n}\bm{W}^{n,T} \bm{D} \bm{W}^{n},
\end{align*}
where superscripts again denote the iteration step. Let us define $\bm{\widetilde B}^n := \bm{W}^{n,T} \bm{B} \bm{W}^{n}$ and $\bm{\widetilde D}^n := \bm{W}^{n,T} \bm{D} \bm{W}^{n}$. Furthermore, we write $\bm{X}^n=\bm{V}\bm{T}^n$, where $\bm{T}^n\in\mathbb{R}^{N_x\times r}$. Then with $\bm{K}^{n+1} = \bm{X}^{n+1}\bm{\widetilde{S}}$ we have
\begin{align*}
    \bm{X}^{n+1}\bm{\widetilde{S}} = \bm{A}^{-1}\bm{C} \bm{K}^{n}\bm{\widetilde D}^n(\bm{\widetilde B}^n)^{-1} = \bm{\widehat C}\bm{X}^n\bm{S}^n \bm{\widetilde D}^{n}(\bm{\widetilde B}^{n})^{-1} = \bm{V}\bm{\Lambda} \bm{T}^n \bm{S}^n\bm{\widetilde D}^{n}(\bm{\widetilde B}^{n})^{-1}.
\end{align*}
Let us define the diagonal matrix $\bm{\Lambda}_c:=\text{diag}(1,\lambda_2/\lambda_1,\cdots,\lambda_{N_x}/\lambda_1)$, which gives
\begin{align*}
    \bm{X}^{n+1}\bm{\widetilde{S}} = \bm{V}\bm{\Lambda}_c \bm{T}^n \bm{S}^n\bm{\widetilde D}^{n}(\bm{\widetilde B}^{n})^{-1}\cdot \lambda_1.
\end{align*}
From the $S$-step as well as Remark~\ref{as:1}, we know that $\bm{S}^n\bm{\widetilde D}^{n}(\bm{\widetilde B}^{n})^{-1}$ is of rank $r$ for all $n$. Hence, the column range of $\bm{X}^{n+1}$, which we denote by $\text{range}(\bm{X}^{n+1})$ is
\begin{align*}
    \text{range}(\bm{X}^{n+1}) = \text{range}(\bm{V}\bm{\Lambda}_c \bm{T}^n) = \text{range}(\bm{V}\bm{\Lambda}_c \bm{V}^{-1}\bm{V} \bm{T}^n)= \text{range}(\bm{V}\bm{\Lambda}_c \bm{V}^{-1}\bm{X}^n) = \text{range}(\bm{V}\bm{\Lambda}_c^{n+1} \bm{T}^0).
\end{align*}
Therefore, there exist $r$ coefficient vectors $\bm{\alpha}_i\in\mathbb{R}^{N_x}$ such that with the normalization factor $\gamma_{i,n} :=\Vert \bm{V}\bm{\Lambda}_c^{n}\bm{\alpha}_i \Vert$ we have $\bm{X}_i^{n+1} = \frac{1}{\gamma_{i,n}}\bm{V}\bm{\Lambda}_c^{n+1}\bm{\alpha}_i$. Note that
\begin{align}
    \big| \gamma_{i,n} - |\alpha_{1i}| \big| \leq \left(\sum_{\ell=2}^{N_x}\left( \frac{\lambda_{\ell}}{\lambda_1}\right)^{2n}\alpha_{\ell i}^2\right)^{1/2}.
\end{align}
Hence, when assuming $\alpha_{1i} > 0$, we have
\begin{align*}
    \Vert \bm{X}_i^n - \bm{v}_1\Vert^2 =& \frac{1}{\gamma_{i,n}^2}\left\Vert \bm{V}(\bm{\Lambda}_c^{n}\bm{\alpha}_i-\bm{e}_1\gamma_{i,n}) \right\Vert^2 \leq \frac{\Vert \bm{V} \Vert_2^2}{\gamma_{i,n}^2}\left\Vert \bm{\Lambda}_c^{n}\bm{\alpha}_i-\bm{e}_1\gamma_{i,n} \right\Vert^2\\
    =& \frac{\Vert \bm{V} \Vert_2^2}{\gamma_{i,n}^2}\left((\alpha_{1i}-\gamma_{i,n})^2 + \sum_{\ell=2}^{N_x}\left( \frac{\lambda_{\ell}}{\lambda_1}\right)^{2n}\alpha_{\ell i}^2 \right) \leq C |\lambda_2/\lambda_1|^n.
\end{align*}
For $\alpha_{1i}\leq 0$ we have that $\Vert \bm{X}_i^n - (-\bm{v}_1)\Vert \leq C |\lambda_2/\lambda_1|^n$. Hence, $\bm{X}_i^n$ converges to $\pm \bm{v}_1$ and the eigenvector $\bm{v}_1$ lies in the range of $\bm{X}^n$ for $n\rightarrow\infty$.
For $j\neq i$, we have
\begin{align*}
    |\bm{v}_1^T \bm{X}_j^n| = |(\bm{v}_1-\bm{X}_i^n)^T \bm{X}_j^n| \leq \Vert \bm{v}_1 - \bm{X}_i^n\Vert \cdot\Vert \bm{X}_j^n\Vert \leq C\cdot|\lambda_2/\lambda_1|^n.
\end{align*}
Furthermore, since we assume that eigenvectors are linearly independent, we have
\begin{align*}
    |\bm{e}_1^T \bm{T}_j^n| = |(\bm{e}_1-\bm{T}_i^n)^T \bm{T}_j^n| \leq \Vert \bm{V}^{-1}\bm{V}(\bm{e}_1 - \bm{T}_i^n)\Vert \cdot\Vert \bm{T}_j^n\Vert \leq \frac{C}{\Vert \bm{V}^{-1}\Vert_2}\cdot|\lambda_2/\lambda_1|^n.
\end{align*}
\end{proof}
\begin{remark}\label{rem:basisW}
The same holds true for $\bm{W}^n$: If $\bm{W}^0$ contains $\bm{u}_1$, this direction lies in the range of $\bm{W}^n$ for $n\rightarrow\infty$. The proof is straightforward and simply applies the above derivation to the $L$-step.
\end{remark}
Now, we know that in the limit, the eigenvectors $\bm{v}_1$ and $\bm{u}_1$ form one of the columns of $\bm{X}$ and $\bm{W}$, respectively. If we have $\bm{X}_i = \bm{v}_1$ and $\bm{W}_{\ell} = \bm{u}_1$ it remains to show that $S_{jk}^n \rightarrow \delta_{ji}\delta_{k\ell}$, such that $\bm \phi^n\rightarrow \lambda_1 \bm{v}_1\bm{u}_1^T \sigma_1$ as $n\rightarrow\infty$. For this, we need to take a closer look at the $S$-step, which reads
\begin{align*}
    \bm{X}^{n+1,T}\bm{A} \bm{X}^{n+1}\bm{\widetilde{S}}^{n+1}\bm{W}^{n+1,T} \bm{B} \bm{W}^{n+1} = \bm{X}^{n+1,T}\bm{C} \bm{X}^{n+1} \bm{M} \bm{S}^{n}\bm{N}^{T}\bm{W}^{n+1,T} \bm{D} \bm{W}^{n+1}
\end{align*}
Therefore,
\begin{align*}
    \bm{\widetilde{S}}^{n+1} = (\bm{\widetilde{A}}^{n+1})^{-1}\bm{\widetilde{C}}^{n+1} \bm{M} \bm{S}^{n}\bm{N}^{T}\bm{\widetilde{D}}^{n+1}(\bm{\widetilde{B}}^{n+1})^{-1}.
\end{align*}
Let us represent our basis in space and energy as $\bm{X}^n = \bm{V}\bm{T}^n_x$ and $\bm{W}^n = \bm{U}\bm{T}^n_e$. Then, the $S$-step can be rewritten in the following way:
\begin{lemma}
With $\bm{\Delta}^{(x)} := \bm{I} - \bm{T}_x\bm{T}_x^T\in\mathbb{R}^{N_x\times N_x}$ and $\bm{\Delta}^{(e)} := \bm{I} - \bm{T}_e\bm{T}_e^T\in\mathbb{R}^{G\times G}$, let us define
\begin{align*}
    \bm{\mathcal E}_x^{(n+1)} :=& (\bm{\widetilde{A}}^{n+1})^{-1}\bm{X}^{n+1,T}\bm{A}\bm{V} \bm{\Delta}^{(x)}\bm{\Lambda}_c \bm{T}_x^{n+1}, \\
    \bm{\mathcal E}_e^{(n+1)} :=&   \bm{T}_e^{n+1,T}\bm{\Sigma}_c\bm{\Delta}^{(e)}\bm{U}^T\bm{B}\bm{W}^{n+1}(\bm{\widetilde{B}}^{n+1})^{-1}.
\end{align*}
Then, the $S$-step takes the form
\begin{align}\label{eq:SstepLemma}
   \bm{\widetilde S}^{n+1} = \lambda_1\sigma_1\left(\bm{T}_x^{n+1,T}\bm{\Lambda}_c \bm{T}_x^{n+1} + \bm{\mathcal E}_x^{(n+1)}\right)\bm{M}\bm{S}^n\bm{N}^T\left(\bm{\mathcal E}_e^{(n+1)}+\bm{T}_e^{n+1,T}\bm{\Sigma}_c \bm{T}_e^{n+1}\right).
\end{align}
\end{lemma}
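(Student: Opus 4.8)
The plan is to obtain \eqref{eq:SstepLemma} as a purely algebraic rearrangement of the $S$-step relation
\[
\bm{\widetilde{S}}^{n+1} = (\bm{\widetilde{A}}^{n+1})^{-1}\bm{\widetilde{C}}^{n+1}\,\bm{M}\,\bm{S}^{n}\bm{N}^{T}\,\bm{\widetilde{D}}^{n+1}(\bm{\widetilde{B}}^{n+1})^{-1}
\]
derived just above, where $\bm{\widetilde{A}}^{n+1}=\bm{X}^{n+1,T}\bm{A}\bm{X}^{n+1}$, $\bm{\widetilde{C}}^{n+1}=\bm{X}^{n+1,T}\bm{C}\bm{X}^{n+1}$, and analogously $\bm{\widetilde{B}}^{n+1},\bm{\widetilde{D}}^{n+1}$ on the energy side. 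Since the middle block $\bm{M}\bm{S}^{n}\bm{N}^{T}$ is carried through untouched, it suffices to bring the spatial factor $(\bm{\widetilde{A}}^{n+1})^{-1}\bm{\widetilde{C}}^{n+1}$ and the energy factor $\bm{\widetilde{D}}^{n+1}(\bm{\widetilde{B}}^{n+1})^{-1}$ separately into the claimed form and then to collect the common scalar $\lambda_1\sigma_1$.

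For the spatial factor I would substitute $\bm{C}=\bm{A}\bm{\widehat C}=\lambda_1\,\bm{A}\bm{V}\bm{\Lambda}_c\bm{V}^{-1}$ and $\bm{X}^{n+1}=\bm{V}\bm{T}_x^{n+1}$, which gives $\bm{\widetilde{C}}^{n+1}=\lambda_1\,\bm{X}^{n+1,T}\bm{A}\bm{V}\bm{\Lambda}_c\bm{T}_x^{n+1}$ after using $\bm{V}^{-1}\bm{X}^{n+1}=\bm{T}_x^{n+1}$. Then I would insert the identity $\bm{I}_{N_x}=\bm{T}_x^{n+1}\bm{T}_x^{n+1,T}+\bm{\Delta}^{(x)}$ (with $\bm{\Delta}^{(x)}=\bm{I}-\bm{T}_x^{n+1}\bm{T}_x^{n+1,T}$) immediately behind $\bm{V}$. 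In the first resulting term $\bm{V}\bm{T}_x^{n+1}$ collapses to $\bm{X}^{n+1}$, so $\bm{X}^{n+1,T}\bm{A}\bm{X}^{n+1}=\bm{\widetilde{A}}^{n+1}$ cancels the leading inverse and leaves $\lambda_1\,\bm{T}_x^{n+1,T}\bm{\Lambda}_c\bm{T}_x^{n+1}$; the second term is, by the definition in the lemma, exactly $\lambda_1\,\bm{\mathcal E}_x^{(n+1)}$. Hence $(\bm{\widetilde{A}}^{n+1})^{-1}\bm{\widetilde{C}}^{n+1}=\lambda_1\bigl(\bm{T}_x^{n+1,T}\bm{\Lambda}_c\bm{T}_x^{n+1}+\bm{\mathcal E}_x^{(n+1)}\bigr)$.

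The energy factor I would treat by the mirror-image computation: now the operator $\bm{\widehat D}=\bm{D}\bm{B}^{-1}$ acts on $\bm{\phi}$ from the right, so I would use its eigendecomposition in the form $\bm{\widehat D}^{T}=\bm{U}\bm{\Sigma}\bm{U}^{-1}=\sigma_1\,\bm{U}\bm{\Sigma}_c\bm{U}^{-1}$ (the right-hand analogue of $\bm{\widehat C}=\bm{V}\bm{\Lambda}\bm{V}^{-1}$, with $\bm{U}$ carried by $\bm{W}^{n+1}=\bm{U}\bm{T}_e^{n+1}$). Writing $\bm{D}=\bm{\widehat D}\bm{B}$ and substituting yields $\bm{\widetilde{D}}^{n+1}(\bm{\widetilde{B}}^{n+1})^{-1}=\sigma_1\,\bm{T}_e^{n+1,T}\bm{\Sigma}_c\,\bm{U}^{T}\bm{B}\bm{W}^{n+1}(\bm{\widetilde{B}}^{n+1})^{-1}$; inserting $\bm{I}_G=\bm{T}_e^{n+1}\bm{T}_e^{n+1,T}+\bm{\Delta}^{(e)}$ behind $\bm{\Sigma}_c$ and using $\bm{T}_e^{n+1,T}\bm{U}^{T}\bm{B}\bm{W}^{n+1}=\bm{W}^{n+1,T}\bm{B}\bm{W}^{n+1}=\bm{\widetilde{B}}^{n+1}$ to telescope the on-basis term gives $\bm{\widetilde{D}}^{n+1}(\bm{\widetilde{B}}^{n+1})^{-1}=\sigma_1\bigl(\bm{\mathcal E}_e^{(n+1)}+\bm{T}_e^{n+1,T}\bm{\Sigma}_c\bm{T}_e^{n+1}\bigr)$. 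Substituting both factors back into the $S$-step relation and pulling $\lambda_1\sigma_1$ in front then gives \eqref{eq:SstepLemma}.

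I do not expect a genuine analytic obstacle here: the lemma is an exact reformulation of the $S$-step, and the smallness of the error blocks $\bm{\mathcal E}_x^{(n+1)}$, $\bm{\mathcal E}_e^{(n+1)}$ — which will drive the subsequent convergence argument — is not claimed at this stage. The only delicate points are bookkeeping ones: placing the resolution of the identity $\bm{I}=\bm{T}\bm{T}^{T}+\bm{\Delta}$ in precisely the slot where the complementary factor $\bm{\Delta}$ has to appear inside $\bm{\mathcal E}_x^{(n+1)}$ and $\bm{\mathcal E}_e^{(n+1)}$, and respecting the left/right and $\bm{V}\leftrightarrow\bm{U}$ transpose asymmetry between the spatial factor (operator acting from the left) and the energy factor (operator acting from the right). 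One should also note, as in the $K$- and $L$-steps, that $\bm{\widetilde{A}}^{n+1}$ and $\bm{\widetilde{B}}^{n+1}$ are invertible, which follows from Remark~\ref{as:1} together with the full column rank of $\bm{X}^{n+1}$ and $\bm{W}^{n+1}$.
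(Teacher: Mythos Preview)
Your proposal is correct and follows essentially the same route as the paper: compute $\bm{\widetilde C}^{n+1}$ via $\bm C\bm V=\bm A\bm V\bm\Lambda$ and $\bm X^{n+1}=\bm V\bm T_x^{n+1}$, insert the resolution of the identity $\bm I=\bm T_x^{n+1}\bm T_x^{n+1,T}+\bm\Delta^{(x)}$ so that the on-basis piece cancels $(\bm{\widetilde A}^{n+1})^{-1}$, and then argue the energy factor by the mirror-image computation. The paper carries out exactly this for the spatial factor (writing $\bm X^{n+1,T}\bm A\bm V$ as $\bm T^{n+1,T}\bm V^T\bm A\bm V$, which is the same thing) and states that the energy part is analogous; your explicit treatment of the left/right asymmetry on the energy side is a helpful addition but not a different argument.
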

\begin{proof}
Let us show the derivation for the spatial part only and drop the $x$ index in the following. We know that $\bm{A}^{-1}\bm{C}\bm{V} = \bm{V}\bm{\Lambda}$, hence $\bm{C}\bm{V} = \bm{A}\bm{V}\bm{\Lambda}$. Then, the matrix $\bm{\widetilde{C}}$ becomes
\begin{align*}
    \bm{\widetilde{C}}^{n+1} = \bm{X}^{n+1,T}\bm{C} \bm{X}^{n+1} = \bm{T}^{n+1,T}\bm{V}^T\bm{C}\bm{V}\bm{T}^{n+1} = \bm{T}^{n+1,T}\bm{V}^T\bm{A}\bm{V}\bm{\Lambda}\bm{T}^{n+1}.
\end{align*}
Let $\bm{\Delta} := \bm{I} - \bm{T}^{n+1}\bm{T}^{n+1,T}\in\mathbb{R}^{N_x\times N_x}$. Then, with $\bm{I} = \bm{T}^{n+1}\bm{T}^{n+1,T}+\bm{\Delta}$ we have
\begin{align*}
    \bm{\widetilde{C}}^{n+1} =& \bm{T}^{n+1,T}\bm{V}^T\bm{A}\bm{V} (\bm{T}^{n+1} \bm{T}^{n+1,T}+\bm{\Delta})\bm{\Lambda} \bm{T}^{n+1}\\
    =& \bm{\widetilde{A}}^{n+1}\bm{T}^{n+1,T}\bm{\Lambda} \bm{T}^{n+1} + \bm{T}^{n+1,T}\bm{V}^T\bm{A}\bm{V} \bm{\Delta}\bm{\Lambda} \bm{T}^{n+1}.
\end{align*}
Therefore,
\begin{align}\label{eq:SStepInProof}
    (\bm{\widetilde{A}}^{n+1})^{-1}\bm{\widetilde{C}}^{n+1} &= \bm{T}^{n+1,T}\bm{\Lambda} \bm{T}^{n+1} + (\bm{\widetilde{A}}^{n+1})^{-1}\bm{X}^{n+1,T}\bm{A}\bm{V} \bm{\Delta}\bm{\Lambda} \bm{T}^{n+1} \nonumber\\
    &= \lambda_1\left(\bm{T}^{n+1,T}\bm{\Lambda}_c \bm{T}^{n+1} + (\bm{\widetilde{A}}^{n+1})^{-1}\bm{X}^{n+1,T}\bm{A}\bm{V} \bm{\Delta}\bm{\Lambda}_c \bm{T}^{n+1}\right)\nonumber\\
    &=: \lambda_1 \left(\bm{T}^{n+1,T}\bm{\Lambda}_c \bm{T}^{n+1}+\bm{\mathcal E}_x^{(n+1)}\right).
\end{align}
The energy parts can be derived analogously, which yields the $S$-step \eqref{eq:SstepLemma}.
\end{proof}
Now we have all building block to show convergence:
\begin{theorem}
The inverse DLRA power iteration scheme as proposed in Section~\ref{sec:DLRAPowerIt} converges for a problem of the form \eqref{eq:simplifiedProblem} to the eigenvector corresponding to the maximal eigenvalue $\lambda_1\sigma_1$. Moreover, we have
\begin{align*}
     \left|\Vert \bm{\phi}^n \Vert - \lambda_1\sigma_1 \right| \leq  C_1 \left(\frac{\lambda_2}{\lambda_1}\right)^{n} + C_2\left(\frac{\sigma_2}{\sigma_1}\right)^{n}.
\end{align*}
\end{theorem}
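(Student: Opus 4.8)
I would reduce the theorem, via the basis‑convergence results already available, to a contraction statement for the coefficient matrix $\bm{S}^n$ alone, and then establish that contraction from the structured $S$-step \eqref{eq:SstepLemma}.

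\textbf{Reduction.} By Lemma~\ref{le:convergenceBasis} there is an index $i\le r$ with (up to a sign) $\Vert\bm{X}_i^n-\bm{v}_1\Vert\le C(\lambda_2/\lambda_1)^n$, and by \eqref{eq:LemmaOrtho} the remaining columns of $\bm{X}^n$ are orthogonal to $\bm{v}_1$ and the first coordinate of $\bm{T}_j^n$ vanishes, both at rate $(\lambda_2/\lambda_1)^n$; Remark~\ref{rem:basisW} supplies an index $m\le r$ with the analogous properties at rate $(\sigma_2/\sigma_1)^n$ on the energy side. Since $\bm{\phi}^n=\bm{X}^n\bm{S}^n\bm{W}^{n,T}$ with $\bm{X}^n,\bm{W}^n$ column-orthonormal, it suffices to prove $\Vert\bm{S}^n\mp\bm{e}_i\bm{e}_m^T\Vert\le C_1(\lambda_2/\lambda_1)^n+C_2(\sigma_2/\sigma_1)^n$: this gives $\bm{\phi}^n\to\pm\bm{v}_1\bm{u}_1^T$, and since the eigenvalue estimate is $k^n=\Vert\bm{\widetilde S}^n\Vert$ and \eqref{eq:SstepLemma} then forces $\bm{\widetilde S}^n\to\pm\lambda_1\sigma_1\,\bm{e}_i\bm{e}_m^T$ at the same rate, it also yields the asserted bound on $|k^n-\lambda_1\sigma_1|$.

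\textbf{Structure of the $S$-step.} Write \eqref{eq:SstepLemma} as $\bm{\widetilde S}^{n+1}=\lambda_1\sigma_1\,\bm{P}_n\bm{M}\bm{S}^n\bm{N}^T\bm{Q}_n$ with $\bm{P}_n=\bm{T}_x^{n+1,T}\bm{\Lambda}_c\bm{T}_x^{n+1}+\bm{\mathcal E}_x^{(n+1)}$, $\bm{Q}_n=\bm{\mathcal E}_e^{(n+1)}+\bm{T}_e^{n+1,T}\bm{\Sigma}_c\bm{T}_e^{n+1}$ ($\bm{\Sigma}_c=\diag(1,\sigma_2/\sigma_1,\dots)$ the energy analogue of $\bm{\Lambda}_c$), $\bm{M}=\bm{X}^{n+1,T}\bm{X}^n$, $\bm{N}=\bm{W}^{n+1,T}\bm{W}^n$, followed by $\bm{S}^{n+1}=\bm{\widetilde S}^{n+1}/k^{n+1}$. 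Using the reduction estimates, $\bm{\Lambda}_c=\bm{e}_1\bm{e}_1^T+\bm{\Lambda}_c'$ with $\Vert\bm{\Lambda}_c'\Vert_2=|\lambda_2/\lambda_1|$, and the boundedness of $\bm{T}_x^{n+1}=\bm{V}^{-1}\bm{X}^{n+1}$, one shows $\bm{P}_n=\bm{e}_i\bm{e}_i^T+\bm{E}_n$ with $\Vert\bm{E}_n\Vert_2\le|\lambda_2/\lambda_1|+o(1)$ and $\bm{P}_n\bm{e}_i=\bm{e}_i+O((\lambda_2/\lambda_1)^n)$ (the $i$-th column of $\bm{E}_n$ is exponentially small because $\bm{\Lambda}_c\bm{T}_i^{n+1}\to\bm{e}_1$ and $\bm{\Delta}^{(x)}\bm{e}_1\to0$ at rate $(\lambda_2/\lambda_1)^n$); symmetrically $\bm{Q}_n=\bm{e}_m\bm{e}_m^T+\widetilde{\bm{E}}_n$ with $\Vert\widetilde{\bm{E}}_n\Vert_2\le|\sigma_2/\sigma_1|+o(1)$ and $\bm{e}_m^T\bm{Q}_n=\bm{e}_m^T+O((\sigma_2/\sigma_1)^n)$; and, by \eqref{eq:LemmaOrtho}, $\bm{M}=\bm{e}_i\bm{e}_i^T+\bm{M}'_n$, $\bm{N}=\bm{e}_m\bm{e}_m^T+\bm{N}'_n$ with $\Vert\bm{M}'_n\Vert_2,\Vert\bm{N}'_n\Vert_2\le1$ and exponentially small $i$-th resp. $m$-th rows and columns. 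Hence $\bm{P}_n\bm{M}$ fixes $\bm{e}_i$ up to $O((\lambda_2/\lambda_1)^n)$, has spectral norm $\le1$, and contracts $\bm{e}_i^\perp$ by $\le|\lambda_2/\lambda_1|+o(1)$; $\bm{N}^T\bm{Q}_n$ does the same from the right with $\bm{e}_m$ and $|\sigma_2/\sigma_1|$.

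\textbf{Contraction and conclusion.} Decompose $\bm{S}^n=\varsigma_n\bm{e}_i\bm{e}_m^T+\bm{\varepsilon}_n$ with $\varsigma_n=(\bm{S}^n)_{im}$, $(\bm{\varepsilon}_n)_{im}=0$, so $\varsigma_n^2+\Vert\bm{\varepsilon}_n\Vert^2=1$. Inserting into $\bm{\widetilde S}^{n+1}=\lambda_1\sigma_1(\bm{P}_n\bm{M})\bm{S}^n(\bm{N}^T\bm{Q}_n)$, the $\varsigma_n\bm{e}_i\bm{e}_m^T$ part reproduces $\lambda_1\sigma_1\varsigma_n\bm{e}_i\bm{e}_m^T$ up to $O((\lambda_2/\lambda_1)^n+(\sigma_2/\sigma_1)^n)$, while every term carrying $\bm{\varepsilon}_n$ picks up a factor $\le|\lambda_2/\lambda_1|+|\sigma_2/\sigma_1|+o(1)$ from $\bm{P}_n\bm{M}$ or $\bm{N}^T\bm{Q}_n$. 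Dividing by $k^{n+1}$, which tends to $\lambda_1\sigma_1$ and stays bounded away from $0$ (by $\Vert\bm{S}^n\Vert=1$ and invertibility of the compressed operators, Remark~\ref{as:1}), gives $1-|\varsigma_n|\le\Vert\bm{\varepsilon}_n\Vert^2$ and a coupled recursion $\Vert\bm{\varepsilon}_{n+1}\Vert\le\rho\,\Vert\bm{\varepsilon}_n\Vert+C(\lambda_2/\lambda_1)^n+C(\sigma_2/\sigma_1)^n$ with $\rho=\max(|\lambda_2/\lambda_1|,|\sigma_2/\sigma_1|)<1$. Solving it yields $\Vert\bm{\varepsilon}_n\Vert\le C_1(\lambda_2/\lambda_1)^n+C_2(\sigma_2/\sigma_1)^n$, hence the same bound for $\Vert\bm{S}^n\mp\bm{e}_i\bm{e}_m^T\Vert$, and the reduction concludes the proof. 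The delicate point is exactly this contraction: the matrices $\bm{T}_x^{n,T}\bm{\Lambda}_c\bm{T}_x^{n}$, $\bm{\mathcal E}_x^{(n)}$ and the off-diagonal block of $\bm{M}$ (and their energy counterparts) only \emph{damp} the spurious directions by the fixed factors $|\lambda_2/\lambda_1|$, $|\sigma_2/\sigma_1|$ rather than annihilating them, and are not individually exponentially small; the recursion closes only because the remainder $\bm{\varepsilon}_n$ is \emph{fed} exclusively through the exponentially small columns/rows generated by the converging basis directions, which forces one to separate the dominant coefficient $\varsigma_n$ from $\bm{\varepsilon}_n$, to keep track of the asymmetric roles of $\bm{P}_n$ (on the left — its $i$-th column matters) and $\bm{Q}_n$ (on the right — its $m$-th row matters), and, when $|\lambda_2/\lambda_1|=|\sigma_2/\sigma_1|$, either to absorb the resulting factor $n$ into a marginally larger base or to sharpen the matching-rate feed-in term.
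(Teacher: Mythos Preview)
Your proposal is correct and follows essentially the same route as the paper: reduce to a contraction for the coefficient matrix $\bm{S}^n$ using the structured $S$-step \eqref{eq:SstepLemma}, exploit Lemma~\ref{le:convergenceBasis} and Remark~\ref{rem:basisW} to control the basis-induced feed-in terms at rates $(\lambda_2/\lambda_1)^n$ and $(\sigma_2/\sigma_1)^n$, and solve the resulting linear recursion. The only cosmetic difference is that the paper splits $\bm{S}^n$ into four blocks via the projections $\bm{P}_i=\bm{e}_i\bm{e}_i^T$, $\bm{P}_i^{\perp}$ on the left and $\bm{P}_{\ell}$, $\bm{P}_{\ell}^{\perp}$ on the right, tracking each block with its own rate (in particular $(\lambda_2\sigma_2/\lambda_1\sigma_1)^n$ for the double-perp block), whereas you lump the three off-$(i,m)$ blocks into a single remainder $\bm{\varepsilon}_n$ with contraction factor $\rho=\max(|\lambda_2/\lambda_1|,|\sigma_2/\sigma_1|)$; both yield the stated bound, and your explicit remark about the possible $n$ factor when the two ratios coincide is a point the paper glosses over.
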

\begin{proof}
Let us investigate the spatial part of the $S$-step \eqref{eq:SstepLemma}, namely \eqref{eq:SStepInProof} and again conclude the terms for the energy part. We start by defining $\bm{P}_i := \bm{e}_i \bm{e}_i^T$ and $\bm{P}^{\perp}_i := \bm{I}-\bm{e}_i \bm{e}_i^T$. The idea of this proof is to show that $\bm{P}_i^{\perp}\bm{S}$ and $\bm{S}(\bm{P}_{\ell}^{\perp})^T \rightarrow 0$ as $n\rightarrow\infty$. Let us start by noting that
\begin{align}\label{eq:PPerpTerm}
    \bm{P}^{\perp}_i\bm{T}^{n+1,T}\bm{\Lambda}_c \bm{T}^{n+1}\bm{M}\bm{S}^n = \bm{P}^{\perp}_i\bm{T}^{n+1,T}(\bm{P}_1+\bm{P}^{\perp}_1)\bm{\Lambda}_c(\bm{P}_1+\bm{P}^{\perp}_1) \bm{T}^{n+1}(\bm{P}_i+\bm{P}^{\perp}_i)\bm{M}\bm{S}^n.
\end{align}
With \eqref{eq:LemmaOrtho}, we have $\Vert \bm{P}^{\perp}_i \bm{T}^{n+1,T}\bm{P}_1\Vert\leq C (\lambda_2/\lambda_1)^{n+1}$ and $\Vert \bm{P}_1^{\perp} \bm{T}^{n+1}\bm{P}_i\Vert\leq C (\lambda_2/\lambda_1)^{n+1}$, hence
\begin{align*}
    \Vert \bm{P}^{\perp}_i \bm{T}^{n+1,T}\bm{P}_1\bm{\Lambda}_c \bm{T}^{n+1}\bm{M}\bm{S}^n \Vert &\leq \widetilde C (\lambda_2/\lambda_1)^{n+1},\\
    \Vert \bm{T}^{n+1,T}\bm{\Lambda}_c \bm{P}^{\perp}_1 \bm{T}^{n+1}\bm{P}_i \bm{M}\bm{S}^n \Vert &\leq \widetilde{C} (\lambda_2/\lambda_1)^{n+1}.
\end{align*}
Together with $\bm{P}_1^{\perp}\bm{\Lambda}_c \bm{P}_1 = 0$, \eqref{eq:PPerpTerm} becomes
\begin{align*}
    \Vert \bm{P}^{\perp}_i\bm{T}^{n+1,T}\bm{\Lambda}_c \bm{T}^{n+1}\bm{M}\bm{S}^n \Vert \leq \Vert \bm{P}^{\perp}_i \bm{T}^{n+1,T}\bm{P}^{\perp}_1\bm{\Lambda}_c \bm{P}^{\perp}_1 \bm{T}^{n+1}\bm{P}^{\perp}_i\bm{M}\bm{S}^n\Vert + C (\lambda_2/\lambda_1)^{n+1}.
\end{align*}
Let us observe that
\begin{align*}
    \Vert \bm{P}_i^{\perp}\bm{M} \bm{P}_i\Vert =& \Vert \bm{P}_i^{\perp}\bm{T}^{n+1,T}\bm{V}^T\bm{V} \bm{T}^n \bm{P}_i\Vert \\
    =& \Vert \bm{P}_i^{\perp}\bm{T}^{n+1,T}(\bm{V}(\bm{P}_1+\bm{P}_1^{\perp}))^T\bm{V}(\bm{P}_1+\bm{P}_1^{\perp}) \bm{T}^n \bm{P}_i\Vert \\
    \leq& \Vert \bm{P}_i^{\perp}\bm{T}^{n+1,T}(\bm{V} \bm{P}_1^{\perp})^T\bm{V} \bm{P}_1 \bm{T}^n \bm{P}_i\Vert + C (\lambda_2/\lambda_1)^{n+1} = C (\lambda_2/\lambda_1)^{n+1},
\end{align*}
where in the last step we used $(\bm{V}\bm{P}_1^{\perp})^T\bm{V}\bm{P}_1 = \bm 0$. Hence, with $\Lambda_0 := \text{diag}(0,\lambda_2/\lambda_1,\cdots,\lambda_{N_x}/\lambda_1)$, we have
\begin{align*}
    \Vert \bm{P}^{\perp}_i\bm{T}^{n+1,T}\bm{\Lambda}_c \bm{T}^{n+1}\bm{M}\bm{S}^n\Vert \leq& \Vert \bm{P}^{\perp}_i \bm{T}^{n+1,T}\bm{P}^{\perp}_1\bm{\Lambda}_c \bm{P}^{\perp}_1 \bm{T}^{n+1}\bm{P}^{\perp}_i\bm{M}\bm{S}^n\Vert+ C (\lambda_2/\lambda_1)^{n+1} \\
    \leq& \Vert \bm{P}^{\perp}_i \bm{T}^{n+1,T}\bm{P}^{\perp}_1\bm{\Lambda}_c \bm{P}^{\perp}_1 \bm{T}^{n+1}\bm{P}^{\perp}_i\bm{M}\bm{P}^{\perp}_i\bm{S}^n\Vert+ C (\lambda_2/\lambda_1)^{n+1}\\
    \leq& \Vert \bm{P}^{\perp}_i \bm{T}^{n+1,T}\bm{\Lambda}_0 \bm{T}^{n+1}\bm{M}\bm{P}^{\perp}_i\bm{S}^n\Vert+ C (\lambda_2/\lambda_1)^{n+1}.
\end{align*}
In the same manner we have with $\bm{\mathcal E}_{x,0}^{(n+1)} := (\bm{\widetilde{A}}^{n+1})^{-1}\bm{X}^{n+1,T}\bm{A}\bm{V} \bm{\Delta}\bm{\Lambda}_0 \bm{T}_x^{n+1}$ that
\begin{align*}
    \Vert \bm{\mathcal E}_x^{(n+1)}\bm{M}\bm{S}^n \Vert \leq \Vert \bm{\mathcal E}_{x,0}^{(n+1)}\bm{M}\bm{P}^{\perp}_i \bm{S}^n \Vert + C_2 (\lambda_2/\lambda_1)^{n+1}.
\end{align*}
\begin{align*}
   \left\Vert\bm{P}_i^{\perp}\left(\bm{T}^{n+1,T}\bm{\Lambda}_c \bm{T}^{n+1}+\bm{\mathcal E}_x^{(n+1)}\right)\bm{M}\bm{S}^n\right\Vert \leq \left\Vert\bm{P}_i^{\perp}\left(\bm{T}^{n+1,T}\bm{V}^T\bm{A}\bm{V}\bm{\Lambda}_0\bm{T}^{n+1}\right)\bm{M}\bm{P}_i^{\perp}\bm{S}^n\right\Vert + C (\lambda_2/\lambda_1)^{n+1}.
\end{align*}
All together, this gives the estimate
\begin{align*}
    &\frac{\lambda_1 \left\Vert\bm{P}_i^{\perp}\left(\bm{T}^{n+1,T}\bm{\Lambda}_c \bm{T}^{n+1}+\bm{\mathcal E}_x^{(n+1)}\right)\bm{M}\bm{S}^n\right\Vert}{\lambda_1 \left\Vert\left(\bm{T}^{n+1,T}\bm{\Lambda}_c \bm{T}^{n+1}+\bm{\mathcal E}_x^{(n+1)}\right)\bm{M}\bm{S}^n\right\Vert}\\
    \leq& \frac{\Vert \bm{P}^{\perp}_i \left(\bm{T}^{n+1,T}\bm{\Lambda}_0 \bm{T}^{n+1} +\bm{\mathcal E}_{x,0}^{(n+1)}\right)\bm{M}\bm{P}^{\perp}_i\bm{S}^n\Vert}{\left\Vert\left(\bm{T}^{n+1,T}\bm{\Lambda}_c \bm{T}^{n+1}+\bm{\mathcal E}_x^{(n+1)}\right)\bm{M}\bm{S}^n\right\Vert}+ C(\lambda_2/\lambda_1)^{n+1}\\
    \leq&\widehat{C}\cdot\Vert \bm{P}^{\perp}_i\bm{S}^n\Vert+ C(\lambda_2/\lambda_1)^{n+1}.
\end{align*}
Since by the normalization step of our scheme $\Vert \bm S^n\Vert = \Vert \bm{S}^{n,-1}\Vert = 1$, we have
\begin{align*}
    \widehat{C} :=& \frac{\left\Vert \bm{P}^{\perp}_i \left(\bm{T}^{n+1,T}\bm{\Lambda}_0 \bm{T}^{n+1} +\bm{\mathcal E}_{x,0}^{(n+1)}\right)\bm{M}\bm{S}^n\bm{S}^{n,-1}\right\Vert}{\left\Vert\left(\bm{T}^{n+1,T}\bm{\Lambda}_c \bm{T}^{n+1}+\bm{\mathcal E}_x^{(n+1)}\right)\bm{M}\bm{S}^n\right\Vert} \leq \frac{\Vert \bm{P}^{\perp}_i \Vert_2 \left\Vert \left(\bm{T}^{n+1,T}\bm{\Lambda}_0 \bm{T}^{n+1} +\bm{\mathcal E}_{x,0}^{(n+1)}\right)\bm{M}\bm{S}^n\right\Vert\cdot\Vert\bm{S}^{n,-1}\Vert}{\left\Vert\left(\bm{T}^{n+1,T}\bm{\Lambda}_c \bm{T}^{n+1}+\bm{\mathcal E}_x^{(n+1)}\right)\bm{M}\bm{S}^n\right\Vert}\\
    =& \frac{\left\Vert \left(\bm{T}^{n+1,T}\bm{\Lambda}_0 \bm{T}^{n+1} +\bm{\mathcal E}_{x,0}^{(n+1)}\right)\bm{M}\bm{S}^n\right\Vert}{\left\Vert\left(\bm{T}^{n+1,T}\bm{\Lambda}_c \bm{T}^{n+1}+\bm{\mathcal E}_x^{(n+1)}\right)\bm{M}\bm{S}^n\right\Vert}\leq \frac{\lambda_2}{\lambda_1}.
\end{align*}
Including the term $\bm{N}^{T}\bm{\widetilde{D}}^{n+1}(\bm{\widetilde{B}}^{n+1})^{-1}$ and utilizing the same arguments as above yields
\begin{align*}
    \Vert \bm{P}^{\perp}_i\bm{S}^{n+1}(\bm{P}^{\perp}_{\ell})^T\Vert \leq& \frac{\lambda_2\sigma_2}{\lambda_1\sigma_1} \Vert \bm{P}^{\perp}_i\bm{S}^n (\bm{P}^{\perp}_{\ell})^T\Vert + C \left(\frac{\lambda_2\sigma_2}{\lambda_1\sigma_1}\right)^{n+1}\\
    \Vert \bm{P}^{\perp}_i\bm{S}^{n+1}\bm{P}_{\ell}^T\Vert \leq& \frac{\lambda_2}{\lambda_1} \Vert \bm{P}^{\perp}_i\bm{S}^n \bm{P}_{\ell}^T\Vert + C \left(\frac{\lambda_2}{\lambda_1}\right)^{n+1}\\
    \Vert \bm{P}_i\bm{S}^{n+1}(\bm{P}^{\perp}_{\ell})^T\Vert \leq& \frac{\sigma_2}{\sigma_1} \Vert \bm{P}_i\bm{S}^n (\bm{P}^{\perp}_{\ell})^T\Vert + C \left(\frac{\sigma_2}{\sigma_1}\right)^{n+1}
\end{align*}
Recursively, we get
\begin{align*}
    \Vert \bm{P}^{\perp}_i\bm{S}^{n}(\bm{P}^{\perp}_{\ell})^T\Vert \leq c \left(\frac{\lambda_2\sigma_2}{\lambda_1\sigma_1}\right)^{n}, \quad \Vert \bm{P}^{\perp}_i\bm{S}^{n}\bm{P}_{\ell}^T\Vert \leq c \left(\frac{\lambda_2}{\lambda_1}\right)^{n}, \quad \Vert \bm{P}_i\bm{S}^{n}(\bm{P}^{\perp}_{\ell})^T\Vert \leq c \left(\frac{\sigma_2}{\sigma_1}\right)^{n}.
\end{align*}
Since $\Vert \bm{S}^{n}\Vert = \Vert (\bm{P}_i+\bm{P}^{\perp}_i)\bm{S}^{n}(\bm{P}_{\ell}+\bm{P}^{\perp}_{\ell})^T\Vert = 1$, we know that 
\begin{align*}
    \left|\Vert \bm{P}_i \bm{S}^{n}\bm{P}_{\ell}^T \Vert - 1 \right| \leq C_0 \left(\frac{\lambda_2\sigma_2}{\lambda_1\sigma_1}\right)^{n} +  C_1 \left(\frac{\lambda_2}{\lambda_1}\right)^{n} + C_2\left(\frac{\sigma_2}{\sigma_1}\right)^{n}.
\end{align*}
I.e., $S_{kj}^n\rightarrow \delta_{ik}\delta_{\ell j}$ and with Lemma~\ref{le:convergenceBasis} and Remark~\ref{rem:basisW} we conclude the theorem.
\end{proof}


\section{Numerical Results}\label{sec:numerical}
In the following, we investigate the proposed algorithm for different material and geometric settings. Results will be compared against the full code framework \cite{codeNeutronDiffusion}. Note that the memory requirements of this code framework do not allow the computation of a finely resolved solution, which is why we show that the DLRA approach converges to the same solution as the full problem for sufficiently large rank. The dynamical low-rank code that has been used in this work can be found in \cite{code}. Note that the material data used in this work cannot be made publicly available. To compare against a finely resolved reference solution, a DLRA solution with high rank will be used.

\subsection{Stainless-steel reflected uranium sphere}
The first problem we consider is the IEU-MET-FAST-005 criticality benchmark from the OECD/NEA suite \cite{oecd}. This problem has a sphere of 36\% enriched uranium surrounded by a neutron reflector comprised of stainless steel. The problem has an overall radius of $21.486$ cm and the uranium sphere has a radius of $13.213$ cm. The stainless steel is divided into shells with two different densities: one   with radius $1.758$ cm and the other in the remainder of the total size.

The dynamical low-rank approximation of this setting is derived according to Section~\ref{sec:DLRAPowerIt}. A spatial discretization with $400$ spatial cells is chosen. The energy domain is represented by $87$ energy groups.  
\begin{figure}[htp!]
\centering
		\centering
		\includegraphics[width=0.49\linewidth]{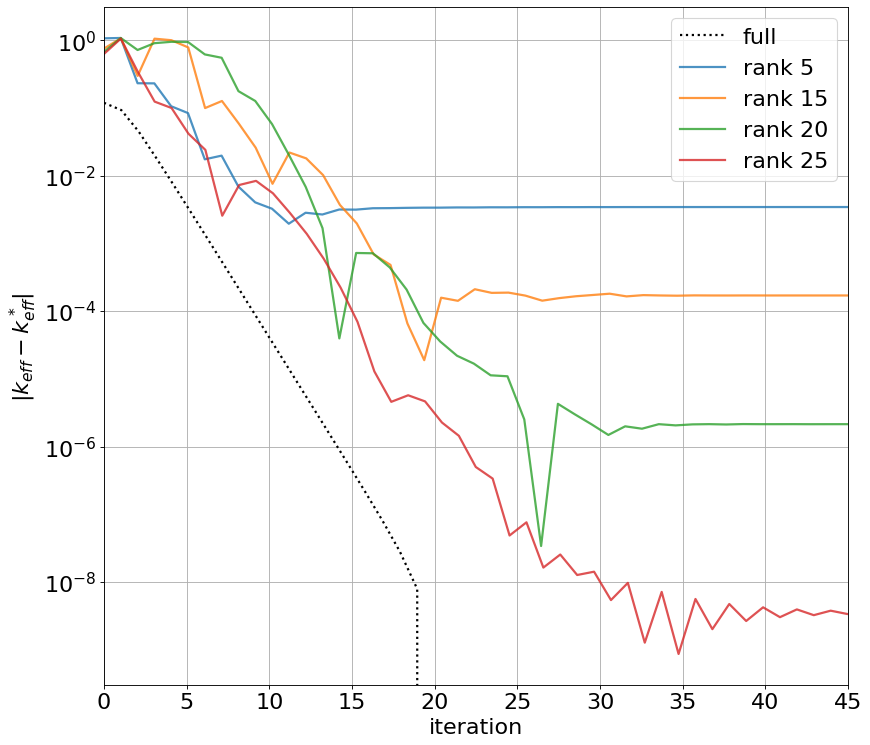}
    \caption{Convergence of the effective eigenvalue for the uranium sphere testcase. As reference effective eigenvalue $k_\mathrm{eff}^*$ for the $87$ group problem, the converged solution of the full inverse power iteration is used. The number of spatial cells is $N_x = 400$.}
	\label{fig:historyMaterialuranium_01}
\end{figure}
Figure~\ref{fig:historyMaterialuranium_01} shows the convergence of the effective eigenvalue for the full problem as well as the DLRA approximation. It is observed that the solutions for rank $20$ and $25$ show satisfactory convergence properties. Lower ranks result in unsatisfactory approximations of the effective eigenvalue. Furthermore, it should be noted that the DLRA method requires an increased number of iterations to reach a converged state. 

\begin{figure}[htp!]
\centering
    \begin{subfigure}{0.49\linewidth}
		\centering
		\includegraphics[width=\linewidth]{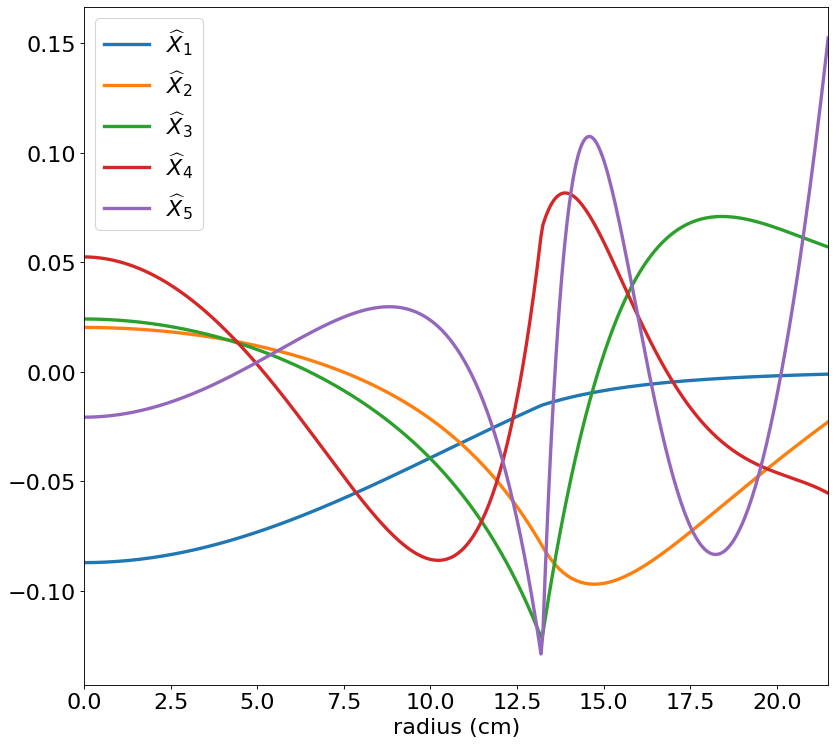}
		\caption{Basis function in radius}
		\label{fig:studyCFLPNRank15}
	\end{subfigure}
	\begin{subfigure}{0.49\linewidth}
		\centering
		\includegraphics[width=\linewidth]{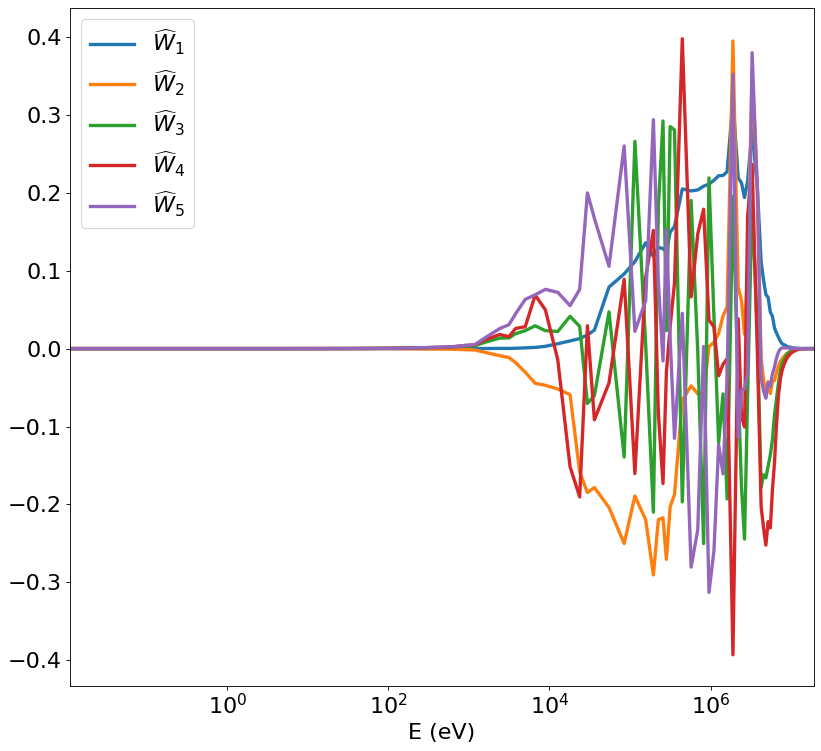}
		\caption{Basis functions in energy}
		\label{fig:studyCFLPNRank10}
	\end{subfigure}
		\begin{subfigure}{0.49\linewidth}
		\centering
		\includegraphics[width=\linewidth]{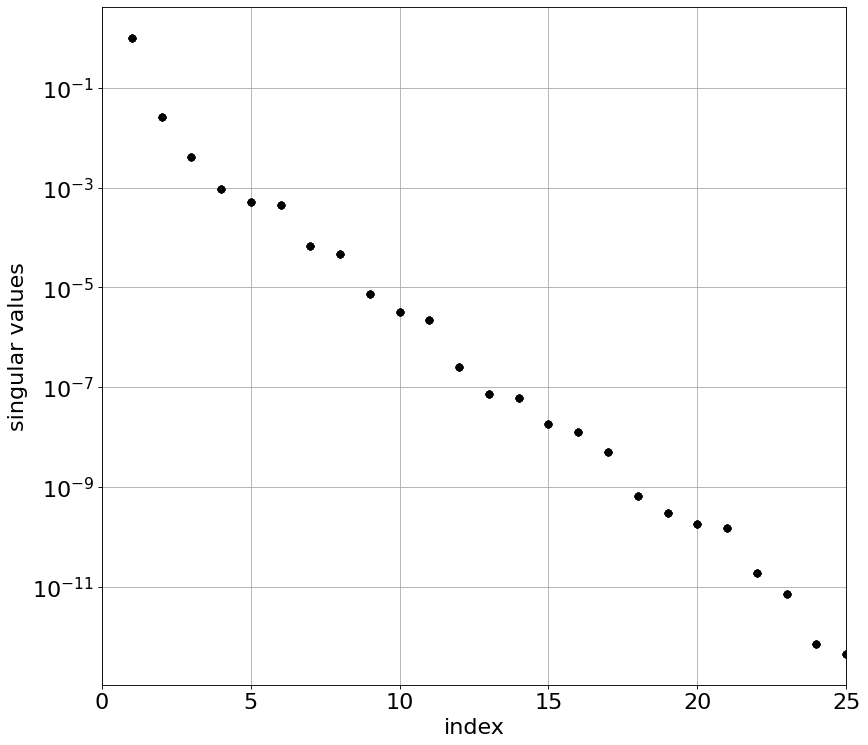}
		\caption{Singular values of the solution matrix}
		\label{fig:studyCFLPNRank10}
	\end{subfigure}
    \caption{First five DLRA basis functions in (a) radius and (b) energy as well as (c) singular values for the uranium sphere problem $N_x = 400$, $G = 87$ and rank $r=25$.}
	\label{fig:basis_uranium_01}
\end{figure}
From the resulting DLRA factorization, we can investigate the dynamics of the system. For this, we compute an SVD decomposition of the coefficient matrix $\bm{S} = \bm{U}\bm{\Sigma}\bm{V}^T$ and plot the vectors $\bm{\widehat X}_i := \bm{X}\bm{U}_i\in\mathbb{R}^{N_x}$ as well as $\bm{\widehat W}_i:= \bm{W}\bm{V}_i\in\mathbb{R}^{G}$, where for sake of representation we only look at the first five basis functions. Basis functions and the corresponding eigenvalues of the diagonal matrix $\bm{\Sigma}\in\mathbb{R}^{r\times r}$ are shown in Figure~\ref{fig:basis_uranium_01}. The basis functions encode both the spatial geometry as well as physical effects. First, the spatial basis captures the change of the background material, especially the transition from uranium to stainless steel. Second, the energy basis encodes the appearance of mostly high-energy particles. The corresponding eigenvalues decay rather slowly as their index increases, which is an indicator for the effectiveness of the method only when the chosen rank is sufficiently high.
\begin{figure}[htp!]
\centering
    \begin{subfigure}{0.49\linewidth}
		\centering
		\includegraphics[width=\linewidth]{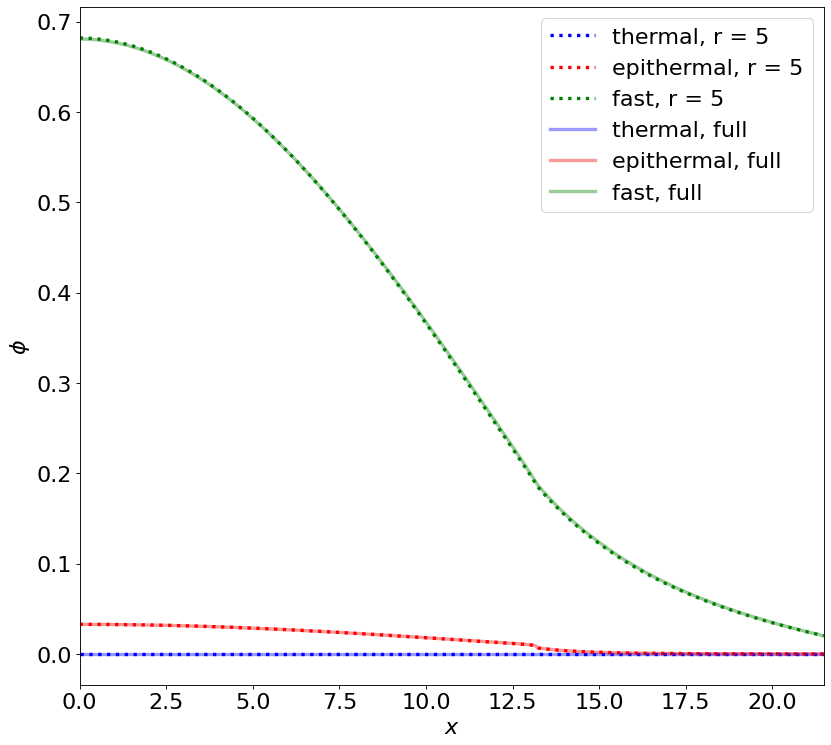}
		\caption{Thermal, epithermal and fast neutrons}
		\label{fig:studyCFLPNRank15Space}
	\end{subfigure}
	   \begin{subfigure}{0.49\linewidth}
		\centering
		\includegraphics[width=\linewidth]{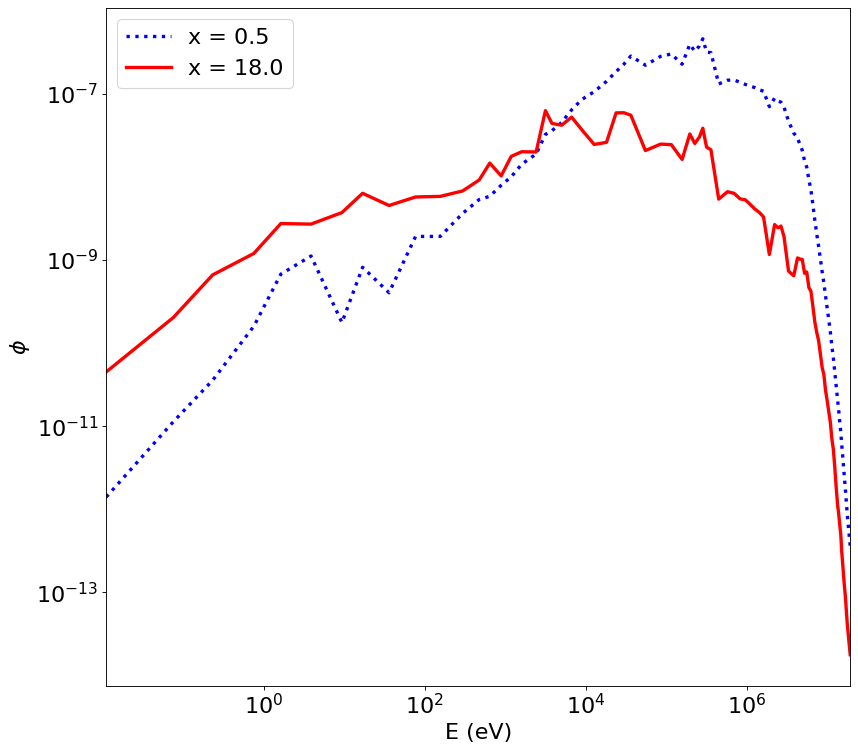}
		\caption{$\phi$ at fixed spatial positions.}
		\label{fig:studyCFLPNRank15Energy}
	\end{subfigure}
    \caption{Left: Thermal ($E\in [0,5]$), epithermal ($E\in (5,0.5\cdot 10^6]$) and fast ($E\in (0.5\cdot 10^6,\infty)$) neutrons with a rank $r=5$ approximation and $N_x = 400$, $G = 87$ as well as the full solution. Right: $\phi$ at fixed spatial positions computed with a rank $25$ approximation.}
	\label{fig:ssSols}
\end{figure}

In Figure \ref{fig:ssSols} we look further at the properties of the solution. The spatial variation in the solution integrated over different energy ranges is shown in Figure \ref{fig:studyCFLPNRank15}. In this figure the fast energy range is all groups above 0.5 MeV, epithermal is above 5 eV up to 0.5 MeV, and everything at 5 eV and below is the thermal range. For this problem we observe that the problem is dominated by fast neutrons, as is to be expected because the stainless steel reflector does not moderate neutrons particularly well. To look at the shape of the solution in energy, Figure \ref{fig:studyCFLPNRank15Space} plots $\phi$ as a function of energy at different spatial points: one near the outer radius of the sphere and one in the interior. Because $\phi_g$ in our solution is the integral over the group energy range, in this and subsequent plots we display the average value $\phi(r,E) = \phi_g(r)/\Delta E_g$ for $E$ in group $g$ where $\Delta E_g$ is the width of group $g$. From Figure \ref{fig:studyCFLPNRank15Energy} we can see that there is a significant shift in the energy spectrum at different spatial points in the system. 

\subsection{Light water reactor}
In this problem we look at the solution for a problem of a homogenized light water reactor using the SHEM 361-group energy group structure \cite{hebert2008refinement}. The problem consists of this homogenized material in a sphere of radius 79.06925 cm. We expect the solution to have more neutrons in the thermal energy range than the previous problem. Furthermore, this problem has a large number of energy groups. From Figure \ref{fig:historyMaterial-lwr} we see that the eigenvalue $\keff$ converges within 1 pcm (1 percent-mille = $10^-5$) with a rank of 7 when compared to a reference calculation with rank 25. For this problem, the memory requirements do not allow a computation with the full solver and only DLRA results are available. This indicates the usefulness of the DLRA solver. The reduced memory requirement of implementations is depicted in Figure~\ref{fig:memoryMaterial-lwr}, where we observe that the memory of the full method grows significantly faster then the DLRA method, therefore only allowing the use of $100$ spatial cells. Note that the full method sets up the full system matrix with $N_x^2\cdot G^2$ entries. A consecutive computation of submatrices for each energy group is possible and reduces memory requirements. However, this approach results in significantly increased runtimes. 

We also note that the number of iterations required is much larger for this problem. This indicates a large dominance ratio, i.e., the ratio of the fundamental eigenvalue to the first harmonic., as would be expected in a problem with a large spatial extent. 
\begin{figure}[htp!]
\centering
		\centering
		\begin{subfigure}{0.49\linewidth}
		\includegraphics[width=1.0\linewidth]{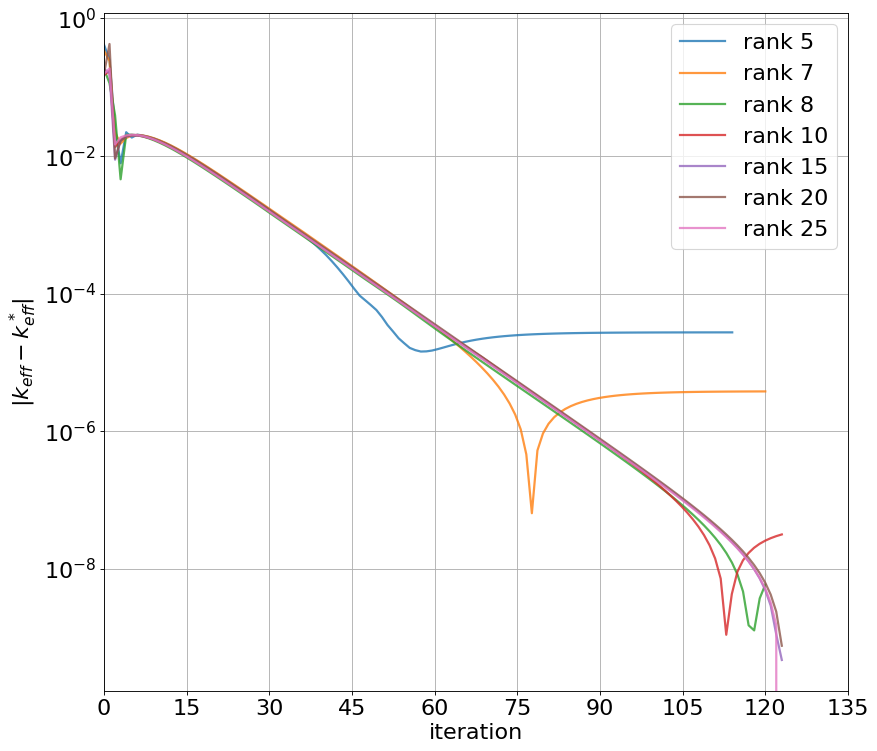}
		\caption{}
		\label{fig:historyMaterial-lwr}
		\end{subfigure}
	    \begin{subfigure}{0.49\linewidth}
	    \includegraphics[width=1.0\linewidth]{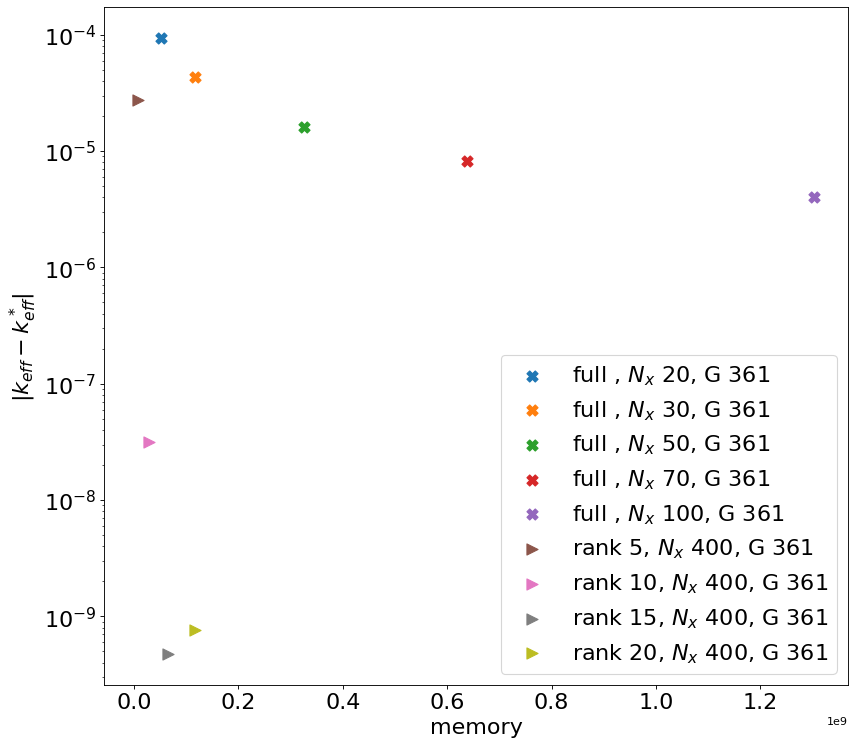}
	    \caption{}
	    \label{fig:memoryMaterial-lwr}
	    \end{subfigure}
    \caption{(a) Convergence of the effective eigenvalue for the light water reactor testcase. (b) Theoretical memory requirement of the full algorithm ($N_x^2\cdot G^2$) vs. dynamical low-rank approximation ($r^2 \cdot N_x^2+ r^2 \cdot G^2$) and corresponding error. As reference effective eigenvalue $k_\mathrm{eff}^*$, the converged solution of the rank $25$ DLRA method is used. The number of spatial cells for DLRA is $N_x = 400$. The full method runs out of memory when using more than $N_x = 100$ cells.}

\end{figure}

The spatial and energy basis functions are plotted in Figure \ref{fig:basis_light_water_reactor}. The spatial basis indicates that the leading mode peaks at the center of the problem and decays toward the boundary. The higher spatial basis functions account for the fact that different energy groups will decay at different rates as the edge of the sphere is approached. In this problem the energy basis is especially interesting because the fine group structure is able to give details on many of the resonances in the energy spectrum. Additionally, we observe in Figure \ref{fig:singularValues-lwr} that the singular values of the system decay more rapidly in the problem, reaching smaller than $10^{-13}$ by rank 25.
\begin{figure}[htp!]
\centering
    \begin{subfigure}{0.49\linewidth}
		\centering
		\includegraphics[width=\linewidth]{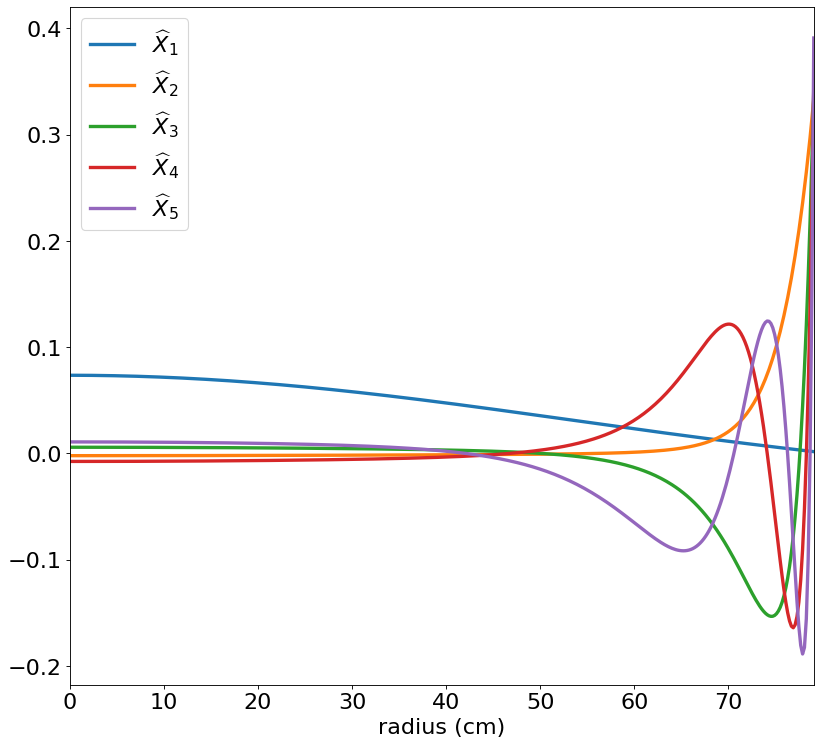}
		\caption{Basis function in radius}
		\label{fig:basisX-lwr}
	\end{subfigure}
	\begin{subfigure}{0.49\linewidth}
		\centering
		\includegraphics[width=\linewidth]{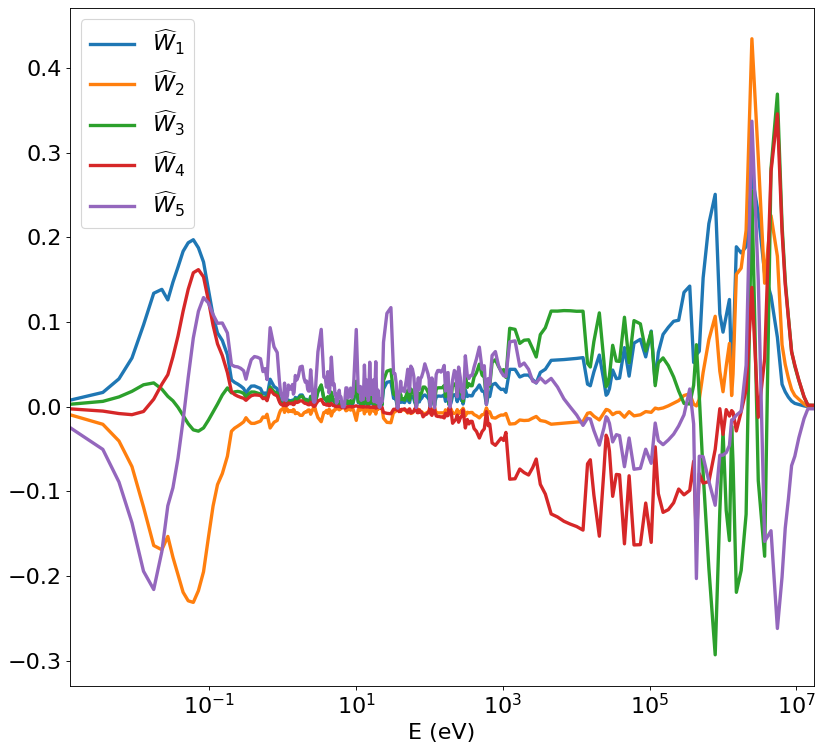}
		\caption{Basis functions in energy}
		\label{fig:basisW-lwr}
	\end{subfigure}
		\begin{subfigure}{0.49\linewidth}
		\centering
		\includegraphics[width=\linewidth]{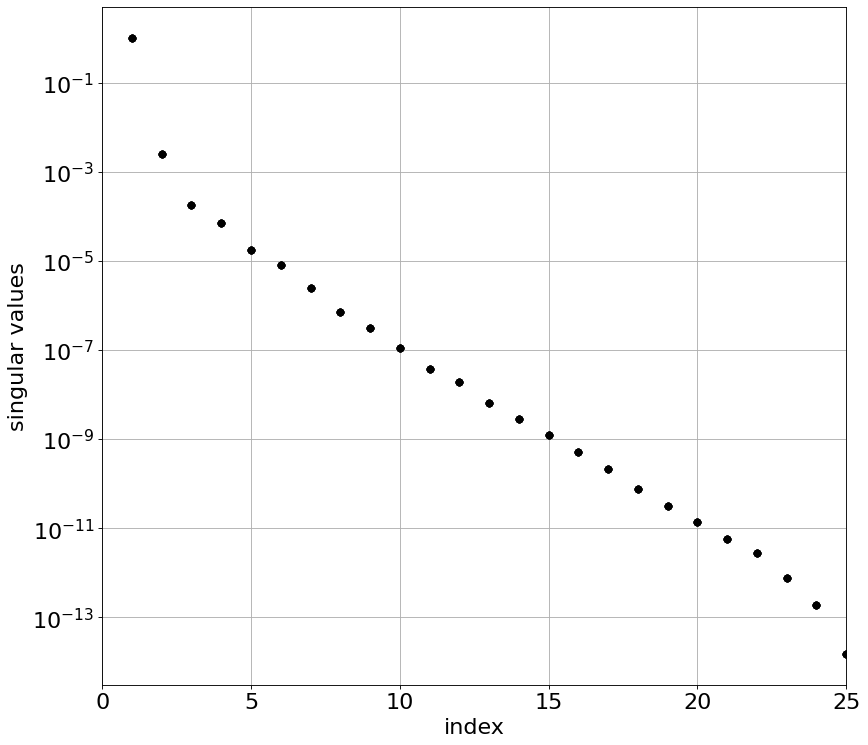}
		\caption{Singular values of the solution matrix}
		\label{fig:singularValues-lwr}
	\end{subfigure}
    \caption{First five DLRA basis functions in (a) radius and (b) energy as well as (c) singular values for the light water reactor problem $N_x = 400$, $G = 361$ and rank $r=25$.}
	\label{fig:basis_light_water_reactor}
\end{figure}

The spatial variation in the solution and the energy spectrum are plotted in Figure \ref{fig:solution-lwr}. In this figure we observe that for this single material problem, the energy spectrum shape is approximately constant with a magnitude that shifts upward as the center of the sphere is approached. We also observe that DLRA is able to capture the energy self-shielding in the epithermal range as evidences by the characteristic dips in the energy spectrum. 
\begin{figure}[htp!]
\centering
    \begin{subfigure}{0.49\linewidth}
		\centering
		\includegraphics[width=\linewidth]{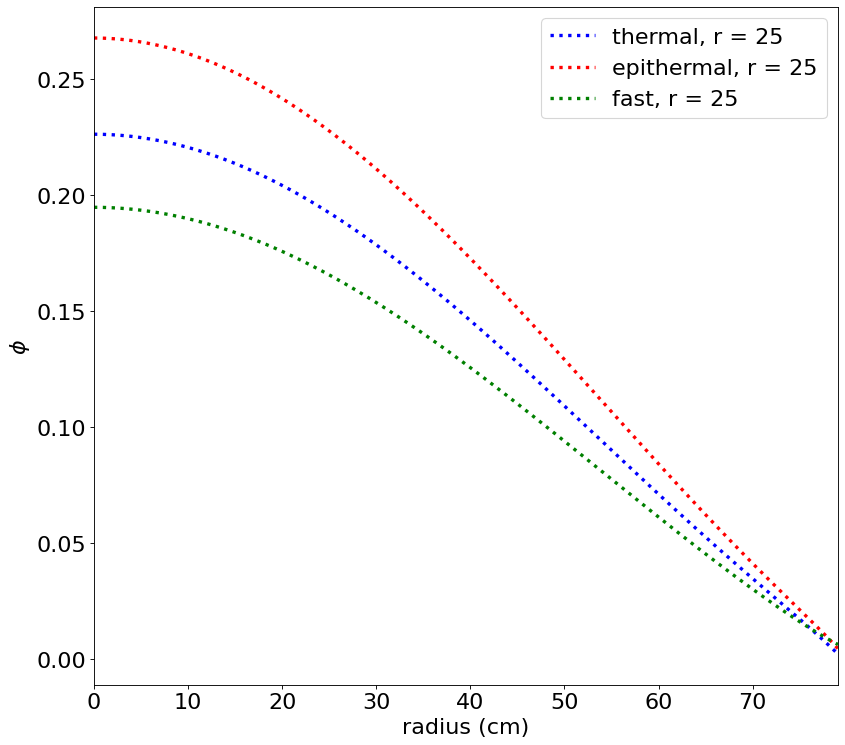}
		\caption{$N_x = 400$, $G = 361$}
		\label{fig:studyCFLPNRank15}
	\end{subfigure}
	\begin{subfigure}{0.49\linewidth}
		\centering
		\includegraphics[width=\linewidth]{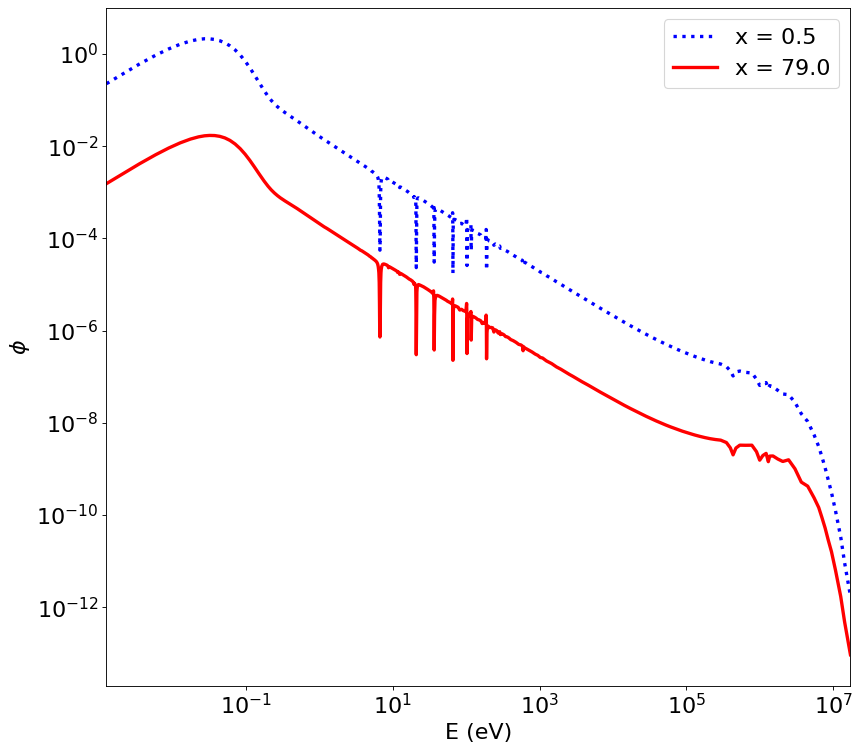}
		\caption{$\phi$ at fixed spatial positions}
		\label{fig:studyCFLPNRank10}
	\end{subfigure}
    \caption{Left: Thermal, epithermal and fast neutrons with a rank $r=25$ approximation. Right: $\phi$ at fixed spatial positions. }
	\label{fig:solution-lwr}
\end{figure}

\section{Conclusion}\label{sec:conclusion}
In this work, we presented a dynamical low-rank iteration to compute effective eigenvalues in criticality problems. The method treats the iteration index as a pseudo-time and thereby allows deriving update equations for the factorized scalar flux with the DLRA method. Consequently, the memory requirements are decreased significantly, permitting the use of fine discretizations. Our numerical experiments show that the method yields satisfactory approximations of effective eigenvalues for sufficiently high ranks. Physically relevant characteristics are captured by chosen basis functions, which encode resonance regions and relevant energy ranges.

In future work, we aim at investigating applying our techniques alongside different acceleration strategies for power iteration, e.g., coarse-mesh finite difference or Anderson acceleration. Moreover, we aim at including transport terms in the problem formulation. In this case, the solution becomes a tensor and we need employ tensor integrators as presented in \cite{CeL21} or \cite{LubichVandWalach}. 

\section*{Acknowledgments}
This work was partially funded by the Deutsche Forschungsgemeinschaft (DFG, German Research Foundation) --- Project-ID 258734477 --- SFB 1173. This work was also partially funded by the Center for Exascale Monte-Carlo Neutron Transport (CEMeNT) a PSAAP-III project funded by the Department of Energy, grant number DE-NA003967.
	
	\bibliographystyle{abbrv}
	\bibliography{main} 

\end{document}